\tikzset{every node/.append style={scale=0.9}}
\numberwithin{equation}{section}
\numberwithin{figure}{section}
\theoremstyle{plain}
\newtheorem*{thA}{Theorem A}
\newtheorem*{thB}{Theorem B}
\newtheorem{theorem}{Theorem}[section]
\newtheorem{proposition}[theorem]{Proposition}
\newtheorem{lemma}[theorem]{Lemma}
\newtheorem{corollary}[theorem]{Corollary}
\theoremstyle{definition}
\newtheorem{remark}[theorem]{Remark}
\newtheorem{definition}[theorem]{Definition}
\numberwithin{equation}{section}
\title[conditions on the continuous spectrum for ergodic Schrödinger Operators]{sufficient conditions on the continuous spectrum for ergodic Schrödinger Operators}
\author{Pablo Blas Tupac Silva Barbosa}
\address{Pablo Blas Tupac Silva Barbosa, Departamento de Matemáticas, UNAL, Universidad Nacional de Colombia - Sede Bogotá}
\email{psilvab@unal.edu.co}
\author{Rafael Alvarez Bilbao}
\address{Rafael Alvarez B., Escuela de Matem\'atica y Estat\'istica, UPTC, Sede Central del Norte Av. Central del Norte 39 - 115, cod. 150003 Tunja, Boyac\'a, Colombia, URL: https://orcid.org/0000-0001-8223-9434} 
	\email{rafael.alvarez@uptc.edu.co}
\begin{document}

% comparar con: http://www.math.caltech.edu/papers/bsimon/p245.pdf

\begin{abstract}
We study the spectral types of the families of discrete one-dimensional Schrödinger operators $\{H_\omega\}_{\omega\in\Omega}$, where the potential of each $H_\omega$ is given by $V_\omega(n)=f(T^n\omega)$ for $n\in\mathbb{Z}$, $T$ is an ergodic homeomorphism on a compact space $\Omega$ and $f:\Omega\rightarrow\mathbb{R}$ is a continuous function. We show that a generic operator $H_\omega\in \{H_\omega\}_{\omega\in\Omega}$ has purely continuous spectrum if $\{T^n\alpha\}_{n\geq0}$ is dense in $\Omega$ for a certain $\alpha\in\Omega$. We also show the former result assuming only that $\{\Omega, T\}$ satisfies topological repetition property (\textit{TRP}), a concept introduced by Boshernitzan and Damanik \cite{damanikBase}. Theorems presented in this paper weaken the hypotheses of the cited research and allow us to reach the same conclusion as those authors. We also provide %complete and self-contained
a proof of Gordon's lemma, which is the main tool used in this work.
\end{abstract}

\keywords{} 
 
\subjclass[]{}

\date{\today}
\maketitle

\section{Introduction}
A Schrödinger operator is the Hamiltonian that describes the dynamics of a conservative system of particles at quantum scale in absence of relativistic forces. Despite its phenomenological origin, theory of Schrödinger operators stands as an autonomous branch of mathematics. Its study links notions of differential equations, geometric analysis, and measure theory, among other fields. In this paper we study the spectral types of discrete one-dimensional Schrödinger operators. This question is of interest because of its physical interpretation: in the system described by a Schrödinger operator, bound states are associated with presence of point spectrum of the operator, while scattering states are associated with continuous spectrum. Discrete one-dimensional Schrödinger operators are defined as:

\begin{equation}\label{Sec1Eq0}
	\begin{aligned}
		H: \mathcal{D}(H)\subseteq\ell^2(\mathbb{Z})&\rightarrow \ell^2(\mathbb{Z})\\
		\psi&\mapsto (\Delta_d+V)\psi
	\end{aligned}
\end{equation}

where $\Delta_d$ is the discrete Laplacian operator and $V$ is the potential function (formal definition of these operators is presented in section \ref{premilinaries}). The spectral theory of Schrödinger operators studies the spectrum of $H$ according to the properties of the function $V$.\\

Given a dynamical system $\{\Omega, T\}$ it is possible to define for each $\omega\in\Omega$ a potential function $V_\omega(n)=f(T^n\omega)$, where $f:\Omega \rightarrow \mathbb{R}$. This approach relates the theory of dynamical systems with the spectral theory of Schrödinger operators. In this context, the analysis focuses on studying the properties of the family of operators $\{H_\omega\}_{\omega\in\Omega}$, where each $H_\omega$ is given by:

\begin{equation}\label{Sec1Eq1}
	\begin{aligned}
		H_\omega: \mathcal{D}(H_\omega)\subseteq\ell^2(\mathbb{Z})&\rightarrow\ell^2(\mathbb{Z})\\
		\psi&\mapsto (\Delta_d+V_\omega)\psi.
	\end{aligned}
\end{equation}

If the transformation $T:\Omega\rightarrow \Omega$ is ergodic, then $\{H_\omega\}_{\omega\in\Omega}$ is a family of ergodic Schrödinger operators. The central question in this domain consists of two objectives: on the one hand, to determine the spectral types (point, absolutely continuous, and singular continuous) and the shape of the spectrum of the operator $H_\omega$ (equation \ref{Sec1Eq1}), and on the other hand, to describe the dynamics of physical systems associated with $H_\omega$. A particular interest in the theory of ergodic Schrödinger operators lies in the study of properties that are satisfied for a generic element of $\{H_\omega\}_{\omega\in\Omega}$. The latter implies, from the topological point of view, studying the properties of an element in a residual subset of $\{H_\omega\}_{\omega\in\Omega}$, and from the measure-theoretical approach, studying the properties of an element in a subset of full measure of $\{H_\omega\}_{\omega\in\Omega}$. In this paper we study the spectral properties of a generic $H_\omega\in\{H_\omega\}_{\omega\in\Omega}$ from the topological point of view. \\

The main background of this work is the research of Boshernitzan and Damanik \cite{damanikBase}. They introduce the definition of topological and metric repetition property (\textit{TRP} and \textit{MRP}, respectively) on the system $\{\Omega, T\}$, and study its implications on the absence of point spectrum of $\{H_\omega\}_{\omega\in\Omega}$, where the potential $V_\omega(n)=f(T^n\omega)$ is given by an ergodic homeomorphism $T$ on a compact space $\Omega$. Another related research is the work of Avila and Damanik \cite{avilaDamanik}. These authors show, using tools from harmonic analysis and Kotani Theory, that the absense of absolute continuous spectrum is a generic property of $\{H_\omega\}_{\omega\in\Omega}$, where $T:\Omega \rightarrow \Omega$ is a nonperiodic homeomorphism.\\

The aim of this paper is to determine sufficient conditions for the purely continuous spectrum of a family of ergodic Schrödinger operators $\{H_\omega\}_{\omega\in\Omega}$ to be a generic property. Our strategy is to show that a generic operator $H_\omega\in\{H_\omega\}_{\omega\in\Omega}$ has no eigenvalues (and therefore its point spectrum is empty). This objective is similar to that of \cite{damanikBase} and \cite{avilaDamanik}, since we seek to rule out the presence of a certain spectral type in a generic Schrödinger operator. %The added value of this research is that we present a self-contained proof of the desired result using only elementary properties of the operators $H_\omega$.
Our work is based on two theoretical tools:

\begin{itemize}
	\item[\textit{(i)}] \textit{Gordon's Lemma}: this is a classical postulate of spectral theory that gives conditions for the absence of point spectrum of a Schrödinger operator.
 
\item[\textit{(ii)}] \textit{TRP}: this property provides the system $\{\Omega, T\}$ with a structure that allows, using Gordon's lemma, to show that a generic operator $H_\omega$ of the family $\{H_\omega\}_{\omega\in\Omega}$ has purely continuous spectrum (see definition \ref{repeticionTopyMet}).
\end{itemize}

Our results extend the main theorem of Boshernitzan and Damanik (\cite{damanikBase}, p.650). These authors demonstrate that purely continuous spectrum is a generic property of $\{H_\omega\}_{\omega\in\Omega}$ using two hypotheses: that the system $\{\Omega, T\} $ is minimal and satisfies \textit{TRP}. We weaken the hypotheses of this result from two perspectives: first, we prove the previous result using only the hypothesis that $\{T^n\alpha\}_{n\geq0}$ is dense in $\Omega$ for certain element $\alpha\in \Omega$ (specifically, for $\alpha\in PRP(T)$. \textit{See definition \ref{PRP}}). Then we demonstrate that \textit{TRP} is sufficient condition for the purely continuous spectrum to be a generic property of the family of ergodic Schrödinger operators $\{H_\omega\}_{\omega\in\Omega}$. The following are the statements of the two main results of this paper.
\begin{thA}
\label{thA}
Suppose that $\alpha\in PRP(T)$ and $\{T^n\alpha\}_{n\geq0}$ is dense in $\Omega$. Then there exists a residual subset $\mathcal{F}$ of $C(\Omega)$ such that if $f\in\mathcal{F}$ then there exists a residual subset $\Omega_f$ of $\Omega$ with the property that $f(T^n\omega)$ is a Gordon potential, for every $\omega\in \Omega_f$. 
\end{thA}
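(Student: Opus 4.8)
The plan is to reduce the statement to two residuality facts and to use throughout that \emph{being a Gordon potential} is a tail condition expressible as a countable intersection of open conditions. Recall that a bounded $V\colon\mathbb{Z}\to\mathbb{R}$ is a Gordon potential when it admits super-exponentially good periodic approximations; concretely, for every $m,K\in\mathbb{N}$ there is an integer $p\ge K$ with $\max_{1\le|n|\le p}|V(n)-V(n+p)|<m^{-p}$. Applying this to $V_\omega^{f}(n):=f(T^{n}\omega)$ and using continuity of $f$ and $T$, each set $\{\omega:\max_{1\le|n|\le p}|f(T^{n+p}\omega)-f(T^{n}\omega)|<m^{-p}\}$ is open in $\Omega$, and likewise $\{g\in C(\Omega):\max_{1\le|n|\le p}|g(T^{n+p}\beta)-g(T^{n}\beta)|<m^{-p}\}$ is open in $C(\Omega)$ for each fixed $\beta$. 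Hence $\Omega_f:=\{\omega:V_\omega^{f}\text{ is a Gordon potential}\}$ is a $G_\delta$ in $\Omega$, and $\{f:V_\beta^{f}\text{ is a Gordon potential}\}$ is a $G_\delta$ in $C(\Omega)$. I will take $\mathcal F:=\bigcap_{s\ge0}\{f\in C(\Omega):V_{T^{s}\alpha}^{f}\text{ is a Gordon potential}\}$, a countable intersection of $G_\delta$ sets.

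Granting that $\mathcal F$ is residual, fix $f\in\mathcal F$. By definition every point of the forward orbit $\{T^{s}\alpha\}_{s\ge0}$ lies in $\Omega_f$, and this orbit is dense in $\Omega$ by hypothesis; therefore $\Omega_f$ is a dense $G_\delta$, hence residual, and $V_\omega^{f}=f(T^{n}\omega)$ is a Gordon potential for every $\omega\in\Omega_f$. This is exactly the second conclusion of the theorem, and it sidesteps any appeal to abstract shift-invariance of the Gordon condition. It remains to prove that $\mathcal F$ is residual; since $C(\Omega)$ is a complete metric space, by the Baire category theorem it suffices to show that each slice $\mathcal F_s:=\{f:V_{T^{s}\alpha}^{f}\text{ is a Gordon potential}\}$ is dense (each being already $G_\delta$).

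To prove density of $\mathcal F_s$, fix $s\ge0$, $g\in C(\Omega)$, $\varepsilon>0$ and the indices $m,K$. Using $\alpha\in PRP(T)$ (Definition \ref{PRP}) I would choose a period $p\ge K$ with $p>s$, so large that $\delta_p:=\max_{s-p\le j\le s+p}d(T^{j+p}\alpha,T^{j}\alpha)$ is as small as we please (this uses that $PRP(T)$ furnishes repetition on a window comfortably longer than the period, so the range $[s-p,s+p]$ is covered for $p$ large). Set $P:=\{T^{j}\alpha: s-p\le j\le s+2p\}$ and identify $T^{j}\alpha\sim T^{j+p}\alpha$ for $s-p\le j\le s+p$; each resulting class is a chain of at most three such identifications, hence has diameter $O(\delta_p)$. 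Define $f$ on $P$ to equal, on each class, the midpoint of the $g$-values on that class, so that $|f-g|\le\tfrac12\,\mathrm{osc}$ of $g$ over a set of diameter $O(\delta_p)$, which is $<\varepsilon$ once $p$ is large by uniform continuity of $g$; extend the perturbation $f-g$ from the finite (hence closed) set $P$ to all of $\Omega$ with the same sup-norm bound via the Tietze extension theorem. Then $\|f-g\|_\infty<\varepsilon$, while by construction $f(T^{j+p}\alpha)=f(T^{j}\alpha)$ throughout the window, whence $\max_{1\le|n|\le p}|f(T^{n+s+p}\alpha)-f(T^{n+s}\alpha)|=0<m^{-p}$, placing $f$ in the corresponding open set. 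As $g,\varepsilon,m,K$ were arbitrary, $\mathcal F_s$ is dense, and Baire's theorem gives that $\mathcal F$ is residual.

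The heart of the argument, and the step I expect to be most delicate, is this perturbation: one must equalize the finitely many near-coincident orbit pairs while keeping the sup-distance to $g$ below $\varepsilon$ and preserving global continuity, both of which hinge on controlling the diameters of the identification classes. This forces careful bookkeeping of how the identifications $j\leftrightarrow j+p$ chain inside the window (and of possible coincidences among orbit points, which merge classes but create no inconsistency), together with a use of $PRP(T)$ that supplies repetition on a window strictly longer than $p$, so that the shifted Gordon range $1\le|n|\le p$ at the orbit point $T^{s}\alpha$ is fully covered. Once this window/period bookkeeping is pinned down, the remaining ingredients — openness of the approximating sets, the $G_\delta$ structure, and the Baire arguments in $C(\Omega)$ and $\Omega$ — are routine.
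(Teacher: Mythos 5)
Your overall architecture is attractive --- defining $\Omega_f=\{\omega: f(T^{\,\cdot}\omega)\text{ is Gordon}\}$ intrinsically as a $G_\delta$ and deducing its density from density of the orbit is cleaner than the paper's explicit union of iterated balls --- but the construction of $\mathcal F$ has a genuine gap at exactly the point you flag as delicate. For fixed $s$, the Gordon condition at $\beta=T^s\alpha$ with period $p$ compares $f(T^{s+n}\alpha)$ with $f(T^{s+n\pm p}\alpha)$ for $1\le n\le p$, so it involves orbit indices in $[s-p+1,\,s+2p]$. Since the Gordon condition forces $p\to\infty$, eventually $p>s$ and this window contains \emph{negative} iterates of $\alpha$, about which the hypotheses say nothing: $PRP(T)$ is a forward-orbit property ($d(T^n\alpha,T^{n+q}\alpha)<\varepsilon$ only for $0\le n\le rq$), and density is assumed only for $\{T^n\alpha\}_{n\ge0}$. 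Consequently your quantity $\delta_p=\max_{s-p\le j\le s+p}d(T^{j+p}\alpha,T^j\alpha)$ is not small --- the pairs with $j<0$ are uncontrolled --- so the identification classes need not have small diameter, the midpoint perturbation can move $g$ by up to $\tfrac12\lVert g\rVert_\infty$ rather than by less than $\varepsilon$, and density of $\mathcal F_s$ fails. Indeed, if $g$ separates some uncontrolled pair $T^j\alpha,\,T^{j+p}\alpha$ (with $j<0$) by more than $2\varepsilon+m^{-p}$ for every admissible $p$, then no $f$ with $\lVert f-g\rVert_\infty<\varepsilon$ lies in your open layer.

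The paper's proof is built precisely to dodge this. It never bases the Gordon window at an orbit point $T^s\alpha$ itself: the candidate points of $\Omega_f$ are taken in $T^{\hat j+q_k}(B_k)$ with $1\le\hat j\le q_k$, where $B_k$ is a small ball around $\alpha$. Writing $\omega=T^{\hat j+q_k}\beta$ with $\beta\in B_k$, the entire two-sided window $\{T^{j-q_k}\omega,\,T^{j}\omega,\,T^{j+q_k}\omega : 1\le j\le q_k\}$ becomes $\{T^{i}\beta : 2\le i\le 4q_k\}$ --- forward iterates only --- and the repetition property at $\alpha$ applied with $r=3$ (window $0\le n\le 3q_k$) controls all the required near-coincidences. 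Your argument can be repaired by adopting exactly this offset, i.e.\ letting the base points depend on the period ($T^{\hat j+p}\alpha$ for $1\le\hat j\le p$, thickened to open balls), at which point it coincides with the paper's construction of $\mathcal F$ and $\Omega_{f,m}$; your intrinsic $G_\delta$ description of $\Omega_f$ would then remain a legitimate streamlining of step (2).
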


\begin{thB}
\label{thB}
Suppose that the dynamical system $\{\Omega, T\}$ satisfies \textit{TRP}. Then there exists a residual subset $\mathcal{F}$ of $C(\Omega)$ such that if $f\in\mathcal{F}$ then there exists a residual subset $\Omega_f$ of $\Omega$ with the property that $f(T^n\omega)$ is a Gordon potential, for every $\omega\in \Omega_f$.
\end{thB}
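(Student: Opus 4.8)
The plan is to run a direct two–parameter Baire category argument: I will build the residual set $\mathcal{F}\subseteq C(\Omega)$ by hand, forcing the $\omega$–sections to be topologically large, and then read off $\Omega_f$ as a dense $G_\delta$. First I would record the Gordon condition in a countable form. Writing $E_q(f,\omega)$ for the discrepancy between the values of the sequence $n\mapsto f(T^n\omega)$ on a block of length of order $q$ about the origin and their translates by $\pm q$ (that is, exactly the quantity entering the hypothesis of Gordon's lemma), the potential $f(T^\cdot\omega)$ is a Gordon potential if and only if for every integer $m\ge 1$ the inequality $E_q(f,\omega)<m^{-q}$ holds for infinitely many $q$. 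Hence, with $(U_{N,m})_f:=\{\omega\in\Omega:\exists\,q\ge N,\ E_q(f,\omega)<m^{-q}\}$, the Gordon set $\mathcal{G}_f:=\{\omega:f(T^\cdot\omega)\text{ is a Gordon potential}\}$ equals $\bigcap_{N,m}(U_{N,m})_f$. Since each $f\circ T^n$ is continuous, $\omega\mapsto E_q(f,\omega)$ is continuous, so each $(U_{N,m})_f$ is open in $\omega$; thus $\mathcal{G}_f$ is automatically a $G_\delta$, and to make it residual it suffices to arrange that every $(U_{N,m})_f$ is dense.

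To control density I would fix a countable dense set $D\subseteq\Omega$ and a countable base of rational radii, and define, for $N,m\ge1$, $\beta\in D$ and rational $\rho>0$, the set
\[
V(N,m,\beta,\rho)=\{f\in C(\Omega):\exists\,\omega\in B(\beta,\rho),\ \exists\,q\ge N,\ E_q(f,\omega)<m^{-q}\}.
\]
Each $V(N,m,\beta,\rho)$ is open in $C(\Omega)$, being a union over $\omega$ and $q$ of sets defined by finitely many strict inequalities in the sup-norm-continuous quantities $E_q(\cdot,\omega)$. The point of this formulation is that if $f$ lies in all of them, then each $(U_{N,m})_f$ meets every basic ball $B(\beta,\rho)$ and is therefore dense, whence $\mathcal{G}_f$ is a dense $G_\delta$, hence residual; one then sets $\Omega_f=\mathcal{G}_f$.

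The heart of the proof, and the step I expect to be the main obstacle, is the density of each $V(N,m,\beta,\rho)$ in $C(\Omega)$; this is exactly where \textit{TRP} (Definition \ref{repeticionTopyMet}) enters, playing the topological role that the dense orbit of a $PRP$ point played in Theorem \ref{thA}. Concretely, \textit{TRP} should guarantee that the repetition points of arbitrarily high scale are topologically abundant: for every threshold $N$ and fineness $\delta>0$ the set of $\omega$ admitting some $q\ge N$ with the orbit block of $\omega$ of length of order $q$ lying $\delta$-close to its $q$-translate is dense in $\Omega$. Granting this, I would, given $g\in C(\Omega)$ and $\varepsilon>0$, use \textit{TRP} to find $\omega_0\in B(\beta,\rho)$ and $q\ge N$ for which the finitely many orbit points $T^n\omega_0$ pair up into $\delta$-close pairs; since $g$ is uniformly continuous, the paired values of $g$ differ by at most $\omega_g(\delta)$, so a perturbation supported on tiny disjoint balls around these finitely many points (via bump functions, treating coincidences as the favourable case) produces $f$ with $\|f-g\|_\infty<\varepsilon$ and $E_q(f,\omega_0)<m^{-q}$, i.e. $f\in V(N,m,\beta,\rho)$. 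The delicate points to verify carefully are the passage from the one- or two-sided repetition block supplied by \textit{TRP} to the exact block on which $E_q$ is measured in Gordon's lemma, the disjointness of the perturbation neighbourhoods when orbit points are mutually close, and the extraction of the clean density statement for repetition sets from the formal definition of \textit{TRP}.

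Finally I would assemble the pieces: $\mathcal{F}:=\bigcap_{N,m,\beta,\rho}V(N,m,\beta,\rho)$ is a countable intersection of open dense sets, hence residual in $C(\Omega)$ by the Baire category theorem, and for every $f\in\mathcal{F}$ the set $\Omega_f=\mathcal{G}_f$ is a dense $G_\delta$ on which $f(T^\cdot\omega)$ is a Gordon potential, as required. I note that if one can show instead that \textit{TRP} forces the existence of some $\alpha\in PRP(T)$ with dense forward orbit, then Theorem \ref{thA} applies verbatim and yields the conclusion; however, since no minimality (nor full-support of the invariant measure) is assumed here, I expect the direct construction above to be the robust route.
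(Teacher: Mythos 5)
Your Baire-category framework is sound and, as bookkeeping, arguably cleaner than the paper's: the countable reformulation of the Gordon condition via Theorem \ref{Equivalencia potencial de Gordon}, the openness of the sets $(U_{N,m})_f$ and $V(N,m,\beta,\rho)$, and the deduction that $\Omega_f=\mathcal{G}_f$ is a dense $G_\delta$ once every $V(N,m,\beta,\rho)$ is dense are all correct. But the one step you defer --- density of $V(N,m,\beta,\rho)$ --- is the entire content of the theorem, and the mechanism you sketch for it does not work as stated. You propose to ``use \textit{TRP} to find $\omega_0\in B(\beta,\rho)$ and $q\ge N$ for which the finitely many orbit points $T^n\omega_0$ pair up into $\delta$-close pairs.'' If $\omega_0$ is taken to be a point of $PRP(T)$ inside $B(\beta,\rho)$ --- which is all that \textit{TRP} (Definition \ref{repeticionTopyMet}) directly provides --- then the repetition property controls only $d(T^n\omega_0,T^{n+q}\omega_0)$ for $n\ge 0$, whereas $E_q(f,\omega_0)$ also involves $f(T^{n-q}\omega_0)$ for $1\le n\le q$, i.e.\ the \emph{backward} orbit points $T^{j}\omega_0$ with $1-q\le j\le 0$, about which Definition \ref{PRP} says nothing. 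The pairs you need to collapse by a small perturbation of $g$ therefore need not be close, and the bump-function perturbation cannot stay $\varepsilon$-small in sup norm.

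The missing idea is exactly the paper's shift-by-$q$ device ($\omega\in T^{j+q_k}(B_k)$ in the construction of $\Omega_{f,m}$): take $\gamma\in PRP(T)\cap B(\beta,\rho/2)$ with a repetition time $q=q_k(\gamma)\ge N$ at scale $\tfrac1k$ valid for $0\le n\le kq$ with $k\ge 4$, and use as witness not $\gamma$ but $\omega_0:=T^{2q}\gamma$. Then the whole two-sided block $\{T^n\omega_0:1-q\le n\le 2q\}=\{T^i\gamma:q+1\le i\le 4q\}$ lies in the \emph{forward} orbit of $\gamma$, where the repetition estimates do apply and give $d(T^n\omega_0,T^{n\pm q}\omega_0)<\tfrac2k$ for $1\le n\le q$; moreover $d(\omega_0,\gamma)\le d(T^{2q}\gamma,T^{q}\gamma)+d(T^{q}\gamma,\gamma)<\tfrac2k$, so $\omega_0\in B(\beta,\rho)$ once $k$ is large. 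Only after this replacement does your perturbation argument (together with the separation of the perturbation neighbourhoods, which you also leave open) go through, and this is where the theorem actually lives. Your fallback remark does not rescue the gap either: \textit{TRP} does not imply the existence of a $PRP$ point with dense forward orbit (two disjoint circles, each carrying an irrational rotation, satisfy \textit{GRP} but have no dense orbit), so Theorem A cannot be invoked instead. As written, then, the proposal is a correct reduction of Theorem B to a density lemma, but the proof of that lemma --- the genuinely nontrivial step --- is missing and its sketched version would fail.
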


In addition to this introduction, the paper is divided into five sections. In the second section we introduce the main definitions in which the work is framed. In the third, we state and demonstrate Gordon's lemma. In the fourth and fifth sections, we prove theorems A and B, respectively. Finally, in the sixth section, we discuss some applications.

\section{Setting and main results}
\label{premilinaries}

In this section we introduce the formal definition of discrete Schrödinger operators. We also present the definition of \textit{TRP} property and discuss some of its implications on the structure of a dynamical system $\{\Omega, T\}$. 

\subsection{Schrödinger operators.} Let $ \mathscr{L}^{2}(\mathbb{R}^n)$ be the space of square integrable functions, which is defined

\begin{equation*}
		 \mathscr{L}^2(\mathbb{R}^n)=\{\Psi:\mathbb{R}^n\rightarrow \mathbb{C} \mid \int_{\mathcal{D}(\Psi)}\lVert \Psi(q)\rVert^2 dq<\infty\}
\end{equation*}

where the inner product is given by:

\begin{equation*}
		\langle \Psi,\Phi\rangle=\int_{\mathcal{D}}\overline{\Psi(q)}\Phi(q)dq.
	\end{equation*}
Discrete version of $ \mathscr{L}^{2}(\mathbb{R}^n)$ is defined as:

\begin{equation*}
		\ell^2(\mathbb{Z}^n)=\{\psi:\mathbb{Z}^n\rightarrow\mathbb{C}\mid \sum_{n\in\mathbb{Z}^n}|\psi(n)|^2<\infty\}
\end{equation*}
 
endowed with inner product:
\begin{equation*}
		\langle \psi,\varphi\rangle=\sum_{n\in\mathbb{Z}}\overline{\psi(n)}\varphi(n).
\end{equation*}
 
\begin{definition} 
\label{operdor de schrodinger}
A Schrödinger operator is defined as a linear operator

\begin{equation}\label{Kap2Eq5}
		\begin{aligned}
			\mathcal{H}: \mathcal{D}(\mathcal{H})\subseteq \mathscr{L}^2(\mathbb{R}^n)&\rightarrow  \mathscr{L}^2(\mathbb{R}^n)\\			
			\Psi&\mapsto (\Delta+V)\Psi
		\end{aligned}
	\end{equation}
\end{definition}
 where $\Delta$ is the Laplacian in $ \mathscr{L}^2(\mathbb{R}^n)$ and $V:\mathbb{R}^n\rightarrow\mathbb{R}$ the potential function. Similary, discrete Schrödinger operators are defined as:

\begin{equation}\label{Kap2Eq8}
		\begin{aligned}
			H: \mathcal{D}(H)\subseteq\ell^2(\mathbb{Z}^n)&\rightarrow \ell^2(\mathbb{Z}^n)\\			
			\psi&\mapsto (\Delta_d+V)\psi
		\end{aligned}
	\end{equation}
	with $\Delta_d$ the discrete Laplacian in $\ell^2(\mathbb{Z}^n)$ and $V:\mathbb{Z}^n\rightarrow\mathbb{R}$ the potential function.
	
	\quad
	
	In this paper we focus on the one-dimensional case, which leads to the following definition, considering $\Delta_d = \psi(n+1)+\psi(n-1)$ as the discrete Laplacian in $\ell^2(\mathbb{Z})$.
	
	\begin{definition}
	 \label{Operador de Schrödinger unidimensional discreto}
	 The discrete one-dimensional Schrödinger operator:
	 
	 \begin{equation}\label{Kap2Eq9}
		\begin{aligned}
			H: \mathcal{D}(H)\subseteq\ell^2(\mathbb{Z})&\rightarrow \ell^2(\mathbb{Z})\\			
			\psi&\mapsto (\Delta_d+V)\psi\\&=\psi(n+1)+\psi(n-1)+V(n)\psi(n)
		\end{aligned}
	\end{equation}
	\end{definition}

\begin{proposition}
\label{proposition1}
The operator $H$ (equation \ref{Kap2Eq9}) is self-adjoint.
\end{proposition}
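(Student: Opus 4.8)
The plan is to write $H = \Delta_d + V$ and treat each summand separately, first showing that $H$ is symmetric and then upgrading symmetry to self-adjointness by controlling the domains. First I would verify that the discrete Laplacian $\Delta_d$ is bounded and symmetric on all of $\ell^2(\mathbb{Z})$. Boundedness is immediate from the triangle inequality, which gives $\|\Delta_d \psi\| \leq 2\|\psi\|$ after noting that each of the two shift operators $\psi(n) \mapsto \psi(n\pm 1)$ is an isometry. Symmetry follows from a direct index-shift computation: writing out $\langle \Delta_d \psi, \varphi\rangle$ as a sum over $n$ and reindexing each of the two resulting sums (by $n+1 \mapsto m$ and $n-1 \mapsto m$) returns $\langle \psi, \Delta_d \varphi\rangle$. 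Since $\Delta_d$ is bounded and symmetric, it is self-adjoint.

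Next I would treat the potential term. The operator $V$ acts as multiplication by the real-valued sequence $(V(n))_{n\in\mathbb{Z}}$, so for $\psi, \varphi$ in the natural domain $\mathcal{D}(V) = \{\psi \in \ell^2(\mathbb{Z}) : (V(n)\psi(n))_n \in \ell^2(\mathbb{Z})\}$ one has $\langle V\psi, \varphi\rangle = \sum_n \overline{V(n)\psi(n)}\varphi(n) = \sum_n \overline{\psi(n)}\, V(n)\varphi(n) = \langle \psi, V\varphi\rangle$, where the middle equality uses $\overline{V(n)} = V(n)$. A standard argument then shows that multiplication by a real sequence is self-adjoint on this maximal domain.

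Finally I would combine the two pieces. In the setting of this paper the potential is $V_\omega(n) = f(T^n\omega)$ with $f$ continuous on the compact space $\Omega$, hence bounded; consequently $V$ is a bounded operator, $H = \Delta_d + V$ is a bounded symmetric operator on all of $\ell^2(\mathbb{Z})$, and a bounded symmetric operator is self-adjoint, which settles the case of interest. For a general (possibly unbounded) real potential the only subtlety, and the main obstacle, is the precise specification of the domain: here one invokes the Kato--Rellich theorem, observing that $\Delta_d$ is a bounded (hence relatively bounded with relative bound $0$) self-adjoint perturbation of the self-adjoint operator $V$, so that $H = V + \Delta_d$ is self-adjoint on $\mathcal{D}(H) = \mathcal{D}(V)$.
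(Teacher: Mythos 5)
Your proposal is correct, and its core is the same as the paper's: decompose $H=\Delta_d+V$, check the Laplacian and the multiplication operator separately, and use that $V$ is real-valued to move it across the inner product. The paper's proof, however, stops at exactly that point --- it is purely a symmetry computation $\langle H\psi,\varphi\rangle=\langle\psi,H\varphi\rangle$, with the self-adjointness of $\Delta_d$ simply asserted and with no discussion of domains, so strictly speaking it establishes that $H$ is symmetric rather than self-adjoint. Your version supplies the two ingredients the paper leaves implicit: you prove $\Delta_d$ is bounded (hence everywhere defined and self-adjoint once symmetric), and you observe that in the setting of the paper $V_\omega(n)=f(T^n\omega)$ with $f$ continuous on a compact space is bounded, so $H$ is a bounded symmetric operator on all of $\ell^2(\mathbb{Z})$ and the symmetric/self-adjoint distinction evaporates --- this is precisely the content of the paper's Corollary~\ref{corollary1}, which it states but does not connect back to the proposition. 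Your final Kato--Rellich step for unbounded real potentials goes beyond anything in the paper and is what actually justifies the notation $\mathcal{D}(H)\subseteq\ell^2(\mathbb{Z})$ in equation~\eqref{Kap2Eq9}; it is correct (a bounded symmetric perturbation of a self-adjoint operator is self-adjoint on the same domain), though for the applications in the paper the bounded case suffices.
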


\begin{proof}

	Let $\psi(n)$ and $\varphi(n)\in\ell^2(\mathbb{Z})$. Then, on the one hand:
	\begin{equation}\label{Kap2Eq91}
		\begin{aligned}
			\langle H\psi(n),\varphi(n)\rangle&=\langle (\Delta_d+V)\psi(n),\varphi(n)\rangle &&\text{by definition of $H$}\\
			&=\langle \Delta_d\psi(n),\varphi(n)\rangle+\langle V(n)\cdot\psi(n),\varphi(n)\rangle&&\text{by definition of $\langle,\rangle$}\\
			&=\langle \psi(n),\Delta_d\varphi(n)\rangle+\langle V(n)\cdot\psi(n),\varphi(n)\rangle&&\text{as $\Delta_d$ is self-adjoint}
		\end{aligned}
	\end{equation}
	On the other hand, note that:
	\begin{equation}\label{Kap2Eq10}
		\begin{aligned}
			\langle V(n)\cdot\psi(n),\varphi(n)\rangle&=\sum_{n\in\mathbb{Z}}\overline{V(n)\psi(n)}\varphi(n)&&\text{by definition of $\langle,\rangle$}\\
			&=\sum_{n\in\mathbb{Z}}\overline{\psi(n)V(n)}\varphi(n)\\
			&=\sum_{n\in\mathbb{Z}}\overline{\psi(n)}V(n)\varphi(n)&&\text{since $V(n)\in\mathbb{R}$}\\
			&=\langle \psi(n),V(n)\cdot\varphi(n)\rangle&&\text{by definition of $\langle,\rangle$}
		\end{aligned}
	\end{equation}
	Replacing \eqref{Kap2Eq10} in \eqref{Kap2Eq9} we obtain:
	\begin{equation*}
		\begin{aligned}
			\langle H\psi(n),\varphi(n)\rangle&=\langle \psi(n),\Delta_d\varphi(n)\rangle+\langle \psi(n),V(n)\varphi(n)\rangle\\&=\langle \psi(n),(\Delta_d+V)\varphi(n)&&\text{by definition of $\langle,\rangle$}\\
			&=\langle\psi(n),H\varphi(n)\rangle&&\text{for definition $H$}
		\end{aligned}
	\end{equation*}
	we conclude that $H$ is self-adjoint.
 \end{proof}
As a consequence of the previous proposition we have the next corollary.

\begin{corollary}
\label{corollary1}
 Let $H=\Delta_d + V$ be a discrete one-dimensional Schrödinger operator. If $V$ is bounded, then $H$ is a bounded self-adjoint operator. 
\end{corollary}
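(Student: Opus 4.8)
The plan is to separate the two assertions of the corollary. Self-adjointness has essentially already been handled: Proposition~\ref{proposition1} establishes the symmetry relation $\langle H\psi,\varphi\rangle=\langle\psi,H\varphi\rangle$ for all $\psi,\varphi\in\ell^2(\mathbb{Z})$, and a symmetric operator that is bounded and defined on the whole Hilbert space is automatically self-adjoint. Hence the only genuinely new content is to show that the boundedness of $V$ forces $H$ to be a bounded operator on all of $\ell^2(\mathbb{Z})$.

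To establish boundedness I would decompose $H=\Delta_d+V$, estimate each summand in operator norm, and combine them via the triangle inequality $\|H\|\le\|\Delta_d\|+\|V\|$. For the discrete Laplacian I would write $\Delta_d=S+S^{-1}$, where $S$ is the shift $(S\psi)(n)=\psi(n+1)$ and $S^{-1}$ its inverse $(S^{-1}\psi)(n)=\psi(n-1)$. Both shifts are unitary on $\ell^2(\mathbb{Z})$, in particular isometries, so $\|S\|=\|S^{-1}\|=1$ and therefore $\|\Delta_d\|\le 2$. For the multiplication operator $V$, I would invoke the hypothesis that $V$ is bounded, writing $M:=\sup_{n\in\mathbb{Z}}|V(n)|<\infty$, to obtain, for every $\psi\in\ell^2(\mathbb{Z})$,
\begin{equation*}
\|V\psi\|^2=\sum_{n\in\mathbb{Z}}|V(n)|^2\,|\psi(n)|^2\le M^2\sum_{n\in\mathbb{Z}}|\psi(n)|^2=M^2\|\psi\|^2,
\end{equation*}
so that $\|V\|\le M$.

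Combining the two estimates gives $\|H\|\le\|\Delta_d\|+\|V\|\le 2+M<\infty$, which shows that $H$ is a bounded operator whose domain is all of $\ell^2(\mathbb{Z})$. Together with the symmetry relation from Proposition~\ref{proposition1}, this yields that $H$ is a bounded self-adjoint operator. I do not expect any serious obstacle here: the argument reduces to a routine pair of norm estimates. The only point deserving a word of care is the logical one, namely that for an everywhere-defined bounded symmetric operator there is no gap between being symmetric and being self-adjoint, so the conclusion follows at once once boundedness is in hand.
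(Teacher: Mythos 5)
Your argument is correct: the norm estimates $\lVert\Delta_d\rVert\le 2$ (via the unitary shifts) and $\lVert V\rVert\le\sup_n|V(n)|$, combined with the triangle inequality and the observation that an everywhere-defined bounded symmetric operator is self-adjoint, give exactly what the corollary claims. The paper itself offers no proof of this corollary beyond asserting it as a consequence of Proposition~\ref{proposition1}, so your write-up simply supplies the routine details the paper leaves implicit, in the standard way.
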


%\begin{proof}

%\end{proof}

\subsection{Discrete one-dimensional ergodic Schrödinger operators.}

Let $\{\Omega, T\}$ be a dynamical system and consider $f:\Omega\rightarrow\mathbb{R}$. Each $\omega\in \Omega$ and its orbit over $T$ (\textit{i.e.} the set $\{T^n\omega\}_{n\in\mathbb{Z}}$) induces the definition of a potential function:
\begin{equation}\label{Kap2Eq11}
	\begin{aligned}
		V_\omega:\mathbb{Z}&\rightarrow\mathbb{R}\\
		n&\mapsto f(T^n\omega).
	\end{aligned}
\end{equation}

Consequently, each $\omega\in\Omega$ is associated with a Schrödinger operator
\begin{equation}\label{Kap2Eq12}
	\begin{aligned}
		H_\omega: \mathcal{D}(H_\omega)\subseteq\ell^2(\mathbb{Z})&\rightarrow\ell^2(\mathbb{Z})\\
		\psi&\mapsto (\Delta_d+V_\omega)\psi.
	\end{aligned}
\end{equation}
and $\{H_\omega\}_{\omega\in\Omega}$ is called \textit{family of Schrödinger operators with dynamically defined potential}.\\

We denote by $\mathcal{S}(\mathcal{H})$ 
the space of self-adjoint operators in a Hilbert space, that is:
\begin{equation*}
	\mathcal{S}(\mathcal{H})=\{H:\mathcal{D}(H)\subseteq\mathcal{H}\rightarrow\mathcal{H}\mid \text{$H$ is self-adjoint}\}
\end{equation*}
The following definition relates the ergodic theory to the Schrödinger operators. 

\begin{definition}\label{ergodic schrodiger operator}
    Let $(\Omega,\mathcal{B},\mu)$ be a probability space and consider:
	\begin{equation*}
		\begin{aligned}
			A:\Omega&\rightarrow\mathcal{S}(\mathcal{H})\\
			\omega&\mapsto H_\omega
		\end{aligned}
	\end{equation*} 
	The operator $H_\omega$ is an \textit{ergodic operator}, if there exists a family of ergodic transformations $\{T_i\}_{i\in\mathbb{Z}}$ in $\Omega$ and a family of unitary operators $\{U_i\}_{i\in\mathbb{Z}}$ en $\mathcal{H}$
such that:
	\begin{equation*}
		H_{T_i(\omega)}=U_i^* H_{\omega}U_i
	\end{equation*}
	where $U_i^*$ is the adjoint operator of $U_i$.
\end{definition}

\begin{lemma}
\label{lema22-1}
Let $(\Omega, \mathcal{B},\mu)$ be a probability space, $\{\Omega, T\}$ a dynamical system with $T$ a $\mu-$ergodic transformation and $f:\Omega\rightarrow\mathbb{R}$ a given function. Then, the operator $H_\omega$ (equation \ref{Kap2Eq12}) is ergodic.
\end{lemma}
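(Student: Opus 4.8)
The plan is to verify Definition \ref{ergodic schrodiger operator} directly by exhibiting the family of transformations and the family of unitary operators that implement the required covariance relation. Since the potential is dynamically defined through $V_\omega(n)=f(T^n\omega)$, the natural choice of transformations is $T_i:=T^i$ for $i\in\mathbb{Z}$, which is well defined because $T$ is a (bijective) $\mu$-ergodic transformation; in particular the $\mathbb{Z}$-action it generates is ergodic. For the unitaries I would take the powers of the shift operator $S$ on $\ell^2(\mathbb{Z})$ defined by $(S\psi)(n)=\psi(n-1)$, setting $U_i:=S^i$.

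First I would record that $S$ is unitary: it preserves the $\ell^2$-norm, since $\sum_n|\psi(n-1)|^2=\sum_n|\psi(n)|^2$, it is a bijection, and a short computation with the inner product shows that its adjoint is the forward shift $(S^*\phi)(n)=\phi(n+1)$, so $S^*=S^{-1}$. Consequently each $U_i=S^i$ is unitary, with $U_i^*=(S^*)^i$.

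The heart of the argument is the single covariance identity $H_{T\omega}=S^*H_\omega S$. To obtain it I would first note how the potential transforms under $T$: for every $n$ we have $V_{T\omega}(n)=f(T^n(T\omega))=f(T^{n+1}\omega)=V_\omega(n+1)$, i.e. applying $T$ shifts the potential index by one. Then conjugating $H_\omega$ by the shift reproduces exactly this index shift: evaluating $(S^*H_\omega S\psi)(n)$ term by term yields $\psi(n+1)+\psi(n-1)+V_\omega(n+1)\psi(n)$, which is precisely $(H_{T\omega}\psi)(n)$. Iterating this relation (equivalently, an immediate induction on $i$) gives $H_{T^i\omega}=(S^*)^iH_\omega S^i=U_i^*H_\omega U_i$ for all $i\in\mathbb{Z}$, which is the covariance relation demanded by Definition \ref{ergodic schrodiger operator}. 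Hence $H_\omega$ is ergodic.

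I expect the only delicate point to be the bookkeeping in the covariance computation: keeping track of which direction the shift moves the indices, so that the conjugation $S^*H_\omega S$ produces $V_\omega(n+1)$ rather than $V_\omega(n-1)$, and correspondingly fixing $S$ as the backward shift to match the orientation of $T$ in the statement. Everything else, namely the unitarity of $S$ and the induction step, is routine.
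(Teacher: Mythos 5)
Your proof is correct and follows essentially the same route as the paper: exhibit the powers of the shift on $\ell^2(\mathbb{Z})$ as the unitaries and verify the covariance relation $H_{T^i\omega}=U_i^*H_\omega U_i$ directly from $V_{T\omega}(n)=V_\omega(n+1)$. The one point where you differ --- taking $U_i=S^i$ with $(S\psi)(n)=\psi(n-1)$ rather than the paper's forward shift $\psi(n)\mapsto\psi(n+t)$ --- is in fact the correct orientation, exactly the ``delicate point'' you flag: with the paper's choice the conjugation produces $f(T^{n-t}\omega)$, and its computation only lands on $f(T^{n+t}\omega)$ because it shifts the arguments of $\psi$ but not of the potential when applying $U_{-t}$.
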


\begin{proof}
Consider the transformation:
	\begin{equation*}
		\begin{aligned}
			A: \Omega&\rightarrow\mathcal{S}(\mathcal{H})\\
			\omega&\mapsto H_\omega=\Delta_d+V_\omega
		\end{aligned}
	\end{equation*}
	By hypothesis $T$ is $\mu-$ergodic, so $\{T^t\}_{t\in\mathbb{Z}}$ is a family of ergodic transformations on $\Omega$. To prove that $H_\omega$ is an ergodic operator, it is necessary to find a family of unitary operators $\{U_t\}_{t\in\mathbb{Z}}$ such that:
	\begin{equation}\label{Kap2Eq13}
		H_{T^t(\omega)}=U_t^* H_{\omega}U_t
	\end{equation}
	 For each $t\in\mathbb{Z}$, consider the translation operator on $\ell^2(\mathbb{Z})$:
	\begin{equation*}
		\begin{aligned}
			U_t: \ell^2(\mathbb{Z})&\rightarrow \ell^2(\mathbb{Z})\\
			\psi(n)&\mapsto\psi(n+t)
		\end{aligned}	
	\end{equation*}
	Notice that the operator $U_t$ is unitary:
	\begin{equation*}
		\begin{aligned}
			\lVert U_t\rVert=\sup_{\psi\neq0}\frac{\lVert U_t \psi(n)\rVert}{\lVert\psi(n)\rVert}=\sup_{\psi\neq0}\frac{\sum_{n\in\mathbb{Z}}|\psi(n+t)|^2}{\sum_{n\in\mathbb{Z}}|\psi(n)|^2}=\sup_{\psi\neq0}\frac{\sum_{n\in\mathbb{Z}}|\psi(n)|^2}{\sum_{n\in\mathbb{Z}}|\psi(n)|^2}=1
		\end{aligned}
	\end{equation*}
	therefore, $\{U_t\}_{t\in\mathbb{Z}}$ is a family of unitary operators on $\ell^2(\mathbb{Z})$. Additionally, we have that:
	\begin{equation*}
		U_t^*=U_{-t},\quad\forall t\in\mathbb{Z}
	\end{equation*}
	Let $\psi(n)\in\ell^2(\mathbb{Z})$. then:
	\begin{equation}\label{Kap2Eq14}
		\begin{aligned}
			H_{T^t(\omega)}\psi(n)=(\Delta_d+V_{T^t\omega})\psi(n)&=\Delta_d \psi(n)+V_{T^t\omega}\psi(n)\\
			&=\psi(n+1)+\psi(n-1)+f(T^{n}[T^t\omega])\psi(n)\\
			&=\psi(n+1)+\psi(n-1)+f(T^{n+t}\omega)\psi(n)
		\end{aligned}
	\end{equation}
	furthermore:
	\begin{equation}\label{Kap2Eq15}
		\begin{aligned}
			U_{-t}H_\omega U_t \psi(n)=U_{-t}H_\omega \psi(n+t)&=U_{-t}[(\Delta_d+V_\omega)\psi(n+t)]\\
			&=U_{-t}[\Delta_d \psi(n+t)+V_\omega\psi(n+t)]\\
			&=U_{-t}(\psi(n+t+1)+\psi(n+t-1)+f(T^{n+t}\omega)\psi(n+t))\\
			&=\psi(n+1)+\psi(n-1)+f(T^{n+t}\omega)\psi(n)
		\end{aligned}
	\end{equation}	
	
	from equations \eqref{Kap2Eq14} and \eqref{Kap2Eq15}:
	\begin{equation*}
		H_{T^t(\omega)}\psi(n)=U_t^* H_{\omega}U_t\psi(n),\quad\forall\psi(n)\in\ell^2(\mathbb{Z})
	\end{equation*}
\end{proof}

Lemma \ref{lema22-1} allows us to conclude that $H_\omega$ (equation \ref{Kap2Eq12}) is an ergodic operator. Therefore, if $T$ is an ergodic homeomorphism on $\Omega$, then $\{H_\omega\}_{\omega\in\Omega}$ is a family of discrete one-dimensional Schrödinger ergodic operators.

\subsection{Repetition property and the dynamical system \texorpdfstring{$\{\Omega, T\}$}{(O,T)} }

\label{sec: espectro continuo topología}

\begin{definition}
 \label{repetición}
%	(\cite{damanikBase} definition 1, p.649).	
Let $(\Omega, d)$ be a compact metric space. The sequence $\{\omega_n\}_{n\geq0}\subseteq\Omega$ satisfies \textit{the repetition property} (\textit{RP}) if for all $\varepsilon>0$ and $r\in\mathbb{Z}_+$ there exist $q\in\mathbb{Z}_+$ such that $d(\omega_n,\omega_{n+q})<\varepsilon$ for all $n\in\{0,1,\ldots,rq\}$.
\end{definition}
Figure \ref{figure1:PR} illustrates \textit{RP} for $\varepsilon>0$ with parameter $r=4$.
\begin{figure}[!ht]%
\label{figure1:PR}
\raggedright
\begin{tikzpicture}
	\draw [dashed](0,0) node[circle,draw, inner sep=1pt, label=below:$\omega_1$](z0){}
	circle (2);
	\draw[-stealth] (z0) -- (150:2) node[midway,above]{$\varepsilon$};
	\node [black] at (1.2,-1.2) {\textbullet};
	\node [black] at (0.9,-1.4) {$\omega_{1+q}$};
	\node [darkgray] at (-1.2,-0.5) {\textbullet};
	\node [black] at (-1.1,-0.8) {$\omega_{1+2q}$};
	\node [gray] at (0.3,0.4) {\textbullet};
	\node [black] at (0.9,0.4) {$\omega_{1+3q}$};
	\node [black] at (0,-2.5) {$n=1$};
	\hspace{4.5cm}
	\draw [dashed](0,0) node[circle,draw, inner sep=1pt, label=below:$\omega_2$](z0){}
	circle (2);
	\draw[-stealth] (z0) -- (150:2) node[midway,above]{$\varepsilon$};
	\node [black] at (1.2,-1.2) {\textbullet};
	\node [black] at (0.9,-1.4) {$\omega_{2+q}$};
	\node [darkgray] at (-1.2,-0.5) {\textbullet};
	\node [black] at (-1.1,-0.8) {$\omega_{2+2q}$};
	\node [gray] at (0.3,0.4) {\textbullet};
	\node [black] at (0.9,0.4) {$\omega_{2+3q}$};
	\node [black] at (0,-2.5) {$n=2$};
	\hspace{2.5cm}
	$\cdots$
	\hspace{2.5cm}
	\draw [dashed](0,0) node[circle,draw, inner sep=1pt, label=below:$\omega_{q-1}$](z0){}
	circle (2);
	\draw[-stealth] (z0) -- (150:2) node[midway,above]{$\varepsilon$};
	\node [black] at (1.2,-1.2) {\textbullet};
	\node [black] at (0.9,-1.4) {$\omega_{2q-1}$};
	\node [darkgray] at (-1.2,-0.5) {\textbullet};
	\node [black] at (-1.1,-0.8) {$\omega_{3q-1}$};
	\node [gray] at (0.3,0.4) {\textbullet};
	\node [black] at (0.9,0.4) {$\omega_{4q-1}$};
	\node [black] at (0,-2.5) {$n=q-1$};
\end{tikzpicture}
\caption{{\small Graphic representation of property \textit{RP}.}} \label{fig:PR}
\end{figure}
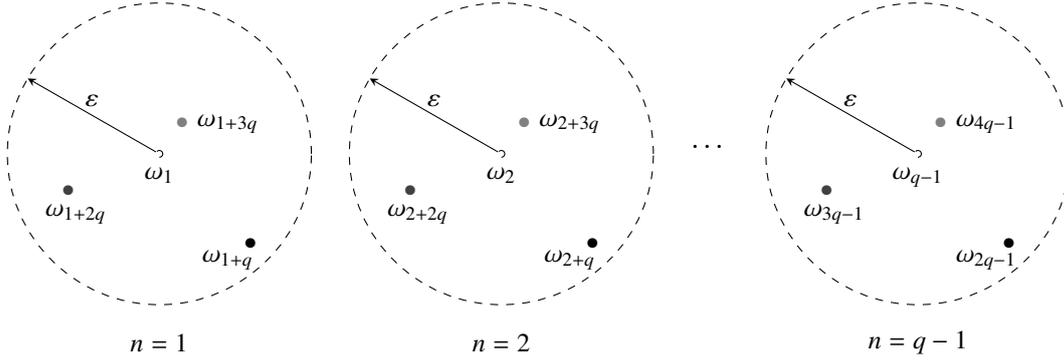

\hspace{2cm}

Property \textit{RP} is independent of the metric defined on $\Omega$ if the latter is compact. This is demonstrated in the following lemma.
\begin{lemma}
\label{métricas equivalentes}
	Let $(\Omega, d_1)$ and $(\Omega, d_2)$ be compact metric spaces and suppose that $\{\omega_n\}_{n\geq0}$ satisfies \textit{RP} on $(\Omega, d_1)$. Then, $\{\omega_n\}_{n\geq0}$ satisfies \textit{RP} on $(\Omega, d_2)$.
\end{lemma}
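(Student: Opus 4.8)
The plan is to reduce the claim to the uniform continuity of the identity map between the two metric structures, exploiting compactness in an essential way. The crucial observation is that, since both $d_1$ and $d_2$ are metrics compatible with the fixed compact topology of $\Omega$, the identity map $\iota\colon (\Omega, d_1)\to(\Omega, d_2)$ is a homeomorphism; in particular it is a continuous map out of a compact metric space, so by the Heine--Cantor theorem it is uniformly continuous. This is the single nontrivial ingredient; everything else is a matter of matching up the quantifiers in Definition \ref{repetición}.

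First I would fix $\varepsilon>0$ and $r\in\mathbb{Z}_+$, the data for which a period $q$ witnessing \textit{RP} on $(\Omega,d_2)$ must be produced. By the uniform continuity of $\iota$ there exists $\delta>0$ such that $d_1(x,y)<\delta$ implies $d_2(x,y)<\varepsilon$ for all $x,y\in\Omega$ (using that $\iota$ is the identity on points, so $d_2(\iota x,\iota y)=d_2(x,y)$). The point is that this $\delta$ depends only on $\varepsilon$ and not on the location of the points $x,y$ in $\Omega$.

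Next I would invoke the hypothesis that $\{\omega_n\}_{n\geq0}$ satisfies \textit{RP} on $(\Omega,d_1)$, applying it with tolerance $\delta$ and the same parameter $r$. This yields some $q\in\mathbb{Z}_+$ with $d_1(\omega_n,\omega_{n+q})<\delta$ for every $n\in\{0,1,\dots,rq\}$. Feeding each pair $x=\omega_n$, $y=\omega_{n+q}$ into the implication from the previous step gives $d_2(\omega_n,\omega_{n+q})<\varepsilon$ for all $n\in\{0,1,\dots,rq\}$. As $\varepsilon>0$ and $r\in\mathbb{Z}_+$ were arbitrary, this is precisely the statement that $\{\omega_n\}_{n\geq0}$ satisfies \textit{RP} on $(\Omega,d_2)$.

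I expect compactness to be the main (indeed the only) obstacle, and it must be used exactly once, to pass from continuity to \emph{uniform} continuity of $\iota$. Without it the correspondence $\varepsilon\mapsto\delta$ would depend on the base point, and then the single period $q$ extracted from \textit{RP} in the metric $d_1$ could fail to control the $d_2$-distances uniformly across the whole block $\{0,\dots,rq\}$; the argument would break down. Compactness is what guarantees one global $\delta$ works simultaneously along the entire orbit segment, so that the same $q$ serves for $d_2$.
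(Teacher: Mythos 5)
Your proof is correct, and it takes a genuinely different — and cleaner — route than the paper. The paper argues directly with metric balls: it fixes radii $\frac{1}{k}$ and $\frac{1}{\overline{k}}$, asserts that for each point one of the two balls $B_{d_1}(x,\frac{1}{k})$, $B_{d_2}(x,\frac{1}{\overline{k}})$ is contained in the other, and then runs a two-case analysis on which containment holds. That dichotomy is not actually guaranteed (two balls in unrelated metrics need not be nested either way), and the second case's inference from $B_{d_2}(\omega_n,\frac{1}{\overline{k}})\subset B_{d_1}(\omega_n,\frac{1}{k})$ to $d_2(\omega_n,\omega_{n+q})<d_1(\omega_n,\omega_{n+q})$ does not follow, so the paper's argument is at best incomplete. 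Your approach factors all the metric comparison through a single standard fact — the identity $\iota\colon(\Omega,d_1)\to(\Omega,d_2)$ is uniformly continuous by Heine--Cantor — after which the quantifier bookkeeping in Definition \ref{repetición} is immediate: the same $q$ (and the same $r$) witnesses \textit{RP} for $d_2$ once $\delta$ is chosen uniformly. This buys a shorter, gap-free proof and isolates exactly where compactness enters. One point worth making explicit in either version: the statement is only true if $d_1$ and $d_2$ induce the same topology on $\Omega$ (otherwise the identity map need not be continuous and the lemma fails); you state this hypothesis, while the paper leaves it implicit, so your write-up is also the more honest one.
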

\begin{proof}

	Suppose that $\{\omega_n\}_{n\geq0}$ satisfies \textit{PR} on $(\Omega, d_1)$. Let $\varepsilon=\frac{1}{k}$ and $r=k$ for $k\in\mathbb{Z}_+$ in definition \ref{repetición}. Consider $B_{d_1}(\omega_i, \frac{1}{k})$ the open ball with center in $\omega_i$ and radius $\frac{1}{k}$ on the space $(\Omega, d_1)$. Observe that:
	\begin{equation*}
		\bigcup_{i=0}^{kq}B_{d_1}(\omega_i, \frac{1}{k}) \cup A
	\end{equation*}
	is a finite covering of $\Omega$ on the space $(\Omega, d_1)$, where $A$ is a open set on $(\Omega, d_1)$ that cover to $\Omega\setminus\{\omega_0,\ldots, \omega_{kq}\}$. In the same way:
 
	\begin{equation*}	\bigcup_{i=0}^{\overline{k}\overline{q}}B_{d_2}(\omega_i, \frac{1}{\overline{k}}) \cup B
	\end{equation*}
	is a finite covering of $\Omega$ on the space $(\Omega, d_2)$, where $B_{d_2}(\omega_i, \frac{1}{\overline{k}})$ is the open ball with center at $\omega_i$ and radius $\frac{1}{\overline{k}}$, and $B$ is a open set that covers $\Omega\setminus\{\omega_0,\ldots, w_{\overline{k}\overline{q}}\}$ on the space $(\Omega, d_2)$. \\
 
Given $\overline{k}\in\mathbb{Z}_+$, there exist $k\in\mathbb{Z}_+$ such that $\frac{1}{k}<\frac{1}{\overline{k}}$. Since $B_{d_1}$ and $B_{d_2}$ are open balls on $(\Omega, d_1)$ and $(\Omega, d_2)$ respectively, for every $x\in\Omega$ we have that $B_{d_1}(x,\frac{1}{k})\subset B_{d_2}(x,\frac{1}{\overline{k}})$ or $B_{d_2}(x,\frac{1}{\overline{k}})\subset B_{d_1}(x,\frac{1}{k})$. We will examine each of these two  cases.\\
	
	First, we assume that $B_{d_1}(x,\frac{1}{k})\subset B_{d_2}(x,\frac{1}{\overline{k}})$ and consider $x=\omega_n$, for $n\in\{0,\ldots, kq\}$. By hypotheses $\{\omega_n\}_{n\geq0}$ satisfies \textit{RP} on $(\Omega, d_1)$, which mean that $\omega_{n+q}\in B_{d_1}(\omega_n,\frac{1}{k})$ for all $n\in\{0,\ldots, kq\}$. As a result: 
	\begin{equation*}
		\omega_{n+q}\in B_{d_1}(\omega_n,\frac{1}{k})\subset B_{d_2}(\omega_n,\frac{1}{\overline{k}}) \quad\Rightarrow\quad\omega_{n+q}\in B_{d_2}(\omega_n,\frac{1}{\overline{k}}), \quad \forall n\in\{0,\ldots, kq\},
	\end{equation*}
	taking, $\overline{q}=q$ we conclude that $\{\omega_n\}_{n\geq0}$ satisfies \textit{RP} on $(\Omega, d_2)$.\\
	
	Now we assume that $B_{d_2}(x,\frac{1}{\overline{k}})\subset B_{d_1}(x,\frac{1}{k})$. This implies, for $x=\omega_n$ with $n\in\{0,\ldots, kq\}$, that if $d_1(\omega_n,y)<\frac{1}{k}$, then $d_2(\omega_n,y)<\frac{1}{k}$. By hypotheses $\{\omega_n\}_{n\geq0}$ satisfies \textit{RP} on $(\Omega, d_1)$, therefore $d_1(\omega_n,\omega_{n+q})<\frac{1}{k}$ for $n\in\{0,\ldots,kq\}$. Then:
	\begin{equation*}
		\begin{aligned}
			B_{d_2}(\omega_n,\frac{1}{\overline{k}})\subset B_{d_1}(\omega_n,\frac{1}{k})\quad&\Rightarrow\quad d_2(\omega_n,\omega_{n+q})<d_1(\omega_n,\omega_{n+q})<\frac{1}{k}\\
			&\Rightarrow \quad d_2(\omega_n, \omega_{n+q})<\frac{1}{k}, \quad \forall n\in\{0,\ldots, kq\}
		\end{aligned}
	\end{equation*}
	then $\{\omega_n\}_{n\geq0}$ satisfies \textit{RP} on $(\Omega, d_2)$.
\end{proof}
	\quad

Now we define the set \textit{PRP(T)}, which relates property \textit{RP} with dynamical systems.

\begin{definition}
\label{PRP}
	Let $\{\Omega,T\}$ be a dynamical system. $PRP(T)$ is defined as the set of points in $\Omega$ such that $\{T^n\omega\}_{n\geq0}$ satisfies \textit{RP}.
	\begin{equation}\label{Kap4Eq2}
		PRP(T)=\{\omega\in\Omega \mid \forall \varepsilon>0, r\in\mathbb{Z}_+ \quad \exists \ q\in\mathbb{Z}_+: d(T^n\omega,T^{n+q}\omega)<\varepsilon \text{, to $0\leq n\leq rq$}\}
	\end{equation}
\end{definition}
Considering $\varepsilon(k)=\frac{1}{k}$, $k\in\mathbb{Z}_+$, the set $PRP(T)$ can be expressed as:
	\begin{equation}\label{Kap4Eq3}
		PRP(T)=\{\omega\in\Omega \mid \forall k\in\mathbb{Z}_+ \quad\exists \ q\in\mathbb{Z}_+: d(T^n\omega,T^{n+q}\omega)<\frac{1}{k} \text{ , to $0\leq n\leq kq$}\}
	\end{equation}
%El lema que se presenta a continuación establece el comportamiento del parámetro $q$ en función de $k$ de acuerdo con la ecuación \eqref{Kap4Eq3}.
\begin{lemma}\label{q infinito}
	Consider $\omega\in PRP(T)$. Given $k$ and $r\in\mathbb{Z}_+$, let $q_k$ be such that $d(T^n\omega,T^{n+q_k}\omega)<\frac{1}{k}$, for $0\leq n\leq rq_k$. Then:
	\begin{equation*}
		\lim_{k\rightarrow\infty}q_k=\infty
	\end{equation*}
\end{lemma}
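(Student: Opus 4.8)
The plan is to argue by contradiction. I would assume that $\lim_{k\to\infty}q_k\neq\infty$. Since every $q_k$ is a positive integer, the failure of divergence means that the sequence $\{q_k\}_{k\in\mathbb{Z}_+}$ has a bounded subsequence, and a bounded sequence of positive integers must take some fixed value infinitely often. Hence there is an infinite index set $K\subseteq\mathbb{Z}_+$ and a fixed $q\in\mathbb{Z}_+$ such that $q_k=q$ for every $k\in K$.

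Next I would exploit the defining inequality of $q_k$ at the base point $n=0$. For each $k\in K$ the index $n=0$ lies in the admissible range $0\le n\le rq_k=rq$, so the hypothesis gives
\begin{equation*}
 d(\omega,T^{q}\omega)<\frac{1}{k}.
\end{equation*}
Letting $k\to\infty$ along $K$ forces $d(\omega,T^{q}\omega)=0$, that is $T^{q}\omega=\omega$, so $\omega$ is a periodic point of $T$ with period dividing $q$.

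The step I expect to be the main obstacle is turning this into an actual contradiction, because periodic points genuinely belong to $PRP(T)$: if $\omega$ has period $p$ one may always take $q$ equal to $p$ (or a multiple), so a bounded, even constant, sequence $q_k$ can satisfy the repetition condition. The statement must therefore be understood in the non-degenerate regime where $\omega$ is \emph{not} periodic, equivalently where the forward orbit $\{T^n\omega\}_{n\ge0}$ is infinite. This is precisely the setting of interest: in Theorem~A the orbit of $\alpha$ is dense in the (infinite) space $\Omega$, which excludes periodicity. Under that standing assumption $T^{q}\omega=\omega$ is impossible, contradicting the supposition that $\{q_k\}$ does not diverge, and the conclusion $\lim_{k\to\infty}q_k=\infty$ follows.

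Finally I would note that, by Lemma~\ref{métricas equivalentes}, the whole argument is insensitive to the particular metric $d$ chosen on the compact space $\Omega$, so there is no loss in working throughout with the fixed metric appearing in Definition~\ref{PRP}.
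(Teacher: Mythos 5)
Your core argument is essentially the paper's own: both proofs proceed by contradiction from a bounded sequence $\{q_k\}$, reduce to finitely many candidate values of $q$, and derive that $d(T^n\omega,T^{n+q}\omega)$ must be simultaneously positive and arbitrarily small. Where the paper sets $\varepsilon_0=\min_{q_k}d(T^n\omega,T^{n+q_k}\omega)$ and then picks $m$ with $0<\frac{1}{m}<\min\{\frac{1}{K},\varepsilon_0\}$, you pass to a constant subsequence $q_k=q$ and let $k\to\infty$ at $n=0$ to get $T^q\omega=\omega$; these are the same mechanism, and yours is the cleaner formulation. The obstacle you flag is real and is not resolved by the paper either: if $\omega$ is periodic with period $p$, then one may take $q_k=p$ for every $k$, the lemma's conclusion fails, and in the paper's proof this manifests as $\varepsilon_0=0$, so that no $m$ with $0<\frac{1}{m}<\varepsilon_0$ exists. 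In other words, the lemma implicitly carries the standing hypothesis that $\omega$ is not a periodic point, which the paper's proof silently assumes and you make explicit. Your observation that this is harmless in the intended applications is also essentially right: in Theorem A the orbit of $\alpha$ is dense in $\Omega$, which rules out periodicity as soon as $\Omega$ is infinite (and for a periodic $\omega$ the potential $V_\omega$ is periodic, hence trivially a Gordon potential, so the downstream conclusions survive anyway). So your proposal is correct as a proof of the lemma under the non-periodicity proviso, and its main added value over the paper's argument is precisely that it isolates this proviso rather than burying it in an unjustified positivity claim.
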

\begin{proof}
	Let $\omega\in PRP(T)$. Reasoning by contradiction, we assume that there exists $L\in\mathbb{Z}_+$ such that $q_k<L$ for all positive integer $k$. By definition \ref{PRP}, there exist $K\in\mathbb{Z}_+$ such that for $n\in\{0,\ldots, KL\}$, we have
	\begin{equation*}
		d(T^n\omega,T^{n+L}\omega)\geq\frac{1}{K},
	\end{equation*}
	otherwise, $q_K=L$ which does not satisfy $q_k<L$ for every $k\in\mathbb{Z}_+$. On the other hand, as $\{T^n\omega\}_{n\geq0}$ satisfies \textit{RP}, then for every $k$ there exists $q_k\in\{0,1,\ldots L-1\}$ with $d(T^n\omega,T^{n+q_k}\omega)<\frac{1}{k}$, where $0\leq n\leq kq_k$.\\

Let $\varepsilon_0=\min_{q_k}d(T^n\omega,T^{n+q_k}\omega)$ and $m\in\mathbb{Z}_+$ such that $0<\frac{1}{m}<\min\{\frac{1}{K},\varepsilon_0\}$. Therefore, as $\omega\in PRP(T)$, there must be a $q_m\in\mathbb{Z}$ with $d(T^n\omega, T^{n+q_m}\omega)<\frac{1}{m}$ for every $0\leq n\leq mq_m$. However, since $\frac{1}{m}<\varepsilon_0$, then $q_m\notin\{0,1,\ldots L-1\}$. It follows that $q_m\geq L$, but this contradicts $q_k<L$ for all $k$. We conclude that $\lim_{k\rightarrow\infty}q_k=\infty$.
 \end{proof}
 
In the following two propositions we study some properties of the set $PRP(T)$.
\begin{lemma}\label{PRP es G_delta}
	$PRP(T)$ is a $G_\delta$ set (i.e. $PRP(T)$ is the countable intersection of open subsets) on $\Omega$.
\end{lemma}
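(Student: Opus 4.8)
The plan is to exhibit $PRP(T)$ directly as a countable intersection of open sets by unwinding the quantifiers in its definition (equation \ref{Kap4Eq3}). The key observation is that the innermost condition defines an open set, and the logical structure ``$\forall k\ \exists q\ \forall n$'' translates verbatim into a countable intersection of a countable union of finite intersections.

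First I would fix $k,q\in\mathbb{Z}_+$ and an index $n$ with $0\leq n\leq kq$, and consider the set
\begin{equation*}
	W_{k,q,n}=\left\{\omega\in\Omega \mid d(T^n\omega,T^{n+q}\omega)<\tfrac{1}{k}\right\}.
\end{equation*}
Since $T$ is a homeomorphism, each iterate $T^m$ is continuous, so the map $\omega\mapsto(T^n\omega,T^{n+q}\omega)$ is continuous into $\Omega\times\Omega$; composing with the metric $d$, which is itself continuous, shows that $\omega\mapsto d(T^n\omega,T^{n+q}\omega)$ is a continuous real-valued function. Hence $W_{k,q,n}$, being the preimage of the open interval $[0,\tfrac{1}{k})$, is open in $\Omega$.

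Next, for fixed $k$ and $q$, I would set $U_{k,q}=\bigcap_{n=0}^{kq}W_{k,q,n}$. This is a \emph{finite} intersection of open sets, hence open, and it is precisely the set of $\omega$ for which the chosen $q$ witnesses the \textit{RP} condition at scale $\tfrac{1}{k}$. Taking the union over all admissible $q$, the set $V_k=\bigcup_{q\in\mathbb{Z}_+}U_{k,q}$ is open, being an arbitrary union of open sets, and it collects exactly those $\omega$ satisfying the ``$\exists q$'' clause at level $k$.

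Finally, comparing with the definition, $PRP(T)=\bigcap_{k\in\mathbb{Z}_+}V_k$, a countable intersection of open sets, which is the desired $G_\delta$ conclusion. I do not anticipate a genuine obstacle here: the only point requiring care is the openness of the building blocks $W_{k,q,n}$, which rests entirely on the continuity of the iterates of $T$ and is guaranteed by the standing assumption that $T$ is a homeomorphism. It is worth noting that nothing forces $V_k$ to be a \emph{finite} union, which is exactly why one obtains a $G_\delta$ set rather than an open set.
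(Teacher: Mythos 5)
Your proof is correct and follows essentially the same route as the paper: both arguments show that $\{\omega : d(T^n\omega,T^{n+q}\omega)<\tfrac1k\}$ is open via continuity of $\omega\mapsto d(T^n\omega,T^{n+q}\omega)$, then assemble $PRP(T)$ as a countable intersection over $k$ of unions over $q$ of these finite intersections over $n$. The only cosmetic difference is that the paper writes $PRP(T)=\bigcap_{k}\bigcap_{m}\bigcup_{q\geq m}A_k(q)$ with a redundant extra intersection over $m$, whereas your $\bigcap_k\bigcup_q U_{k,q}$ mirrors the definition verbatim; your phrasing of openness as a preimage is in fact cleaner than the paper's.
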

\begin{proof}
	Let $k$ and $q$ positive integers, consider:
	\begin{equation*}
		A_k(q)=\left\{\omega\in\Omega\mid \max_{0\leq n\leq kq} d(T^n\omega, T^{n+q}\omega)<\frac{1}{k}\right\}
	\end{equation*}
	we have, that $A_k(q)$ is an open subset of $\Omega$. Indeed, given the function $f$:
	\begin{equation*}
		\begin{aligned}
			f:\Omega&\rightarrow\Omega\times\Omega\\
			\omega&\mapsto (T^n\omega, T^{n+q}\omega)
		\end{aligned}
	\end{equation*}
	We observe that $f$ is continuous, since by hypothesis $T:\Omega\rightarrow\Omega$ is a homeomorphism. Furthermore, we know that, the metric $d:\Omega\times \Omega\rightarrow \mathbb{R}^+$ is a continuous function, so that $d\circ f:\Omega\rightarrow\mathbb{R}^+$ is continuous, thus:
	\begin{equation*}
		d\circ f(A_k(q))=[0,\frac{1}{k})
	\end{equation*}
	where $[0,\frac{1}{k})$ is open set on $\mathbb{R}^+$. We conclude that $A_k(q)$ is an open set on $\Omega$.\\

	Now, we will show the following equality
	\begin{equation}\label{Kap4Eq4}
		\bigcap_{k\geq1}\bigcap_{m\geq1}\bigcup_{q\geq m} A_k(q)= PRP(T)
	\end{equation}
	Let $\omega\in \bigcap_{k\geq1}\bigcap_{m\geq1}\bigcup_{q\geq m} A_k(q)$, then for every $k$ and $m$ into $\mathbb{Z}$ there exist $q\in\mathbb{Z}_+$ such that $d(T^n\omega, T^{n+q}\omega)<\frac{1}{k}$, for all $n\in\{0,\ldots,kq\}$. Then by definition \ref{PRP}, we have that $\omega\in PRP(T)$ i. e.,
	\begin{equation*}
\bigcap_{k\geq1}\bigcap_{m\geq1}\bigcup_{q\geq m} A_k(q)\subseteq PRP(T)
	\end{equation*}
 
	In a similar way, if $\omega\in PRP(T)$ then for $k\in\mathbb{Z}_+$ there exists $q$ such that $d(T^n\omega, T^{n+q}\omega)<\frac{1}{k}$, so that $\omega \in A_k(q)$ for all positive integer $k$ hence:
	\begin{equation*}
		\omega\in \bigcap_{k\geq1}\bigcap_{m\geq1}\bigcup_{q\geq m} A_k(q)
	\end{equation*}
	so, $PRP(T)$ is the countable intersection of open sets and therefore it is a $G_\delta$ set.\end{proof}

\begin{lemma}
\label{PRP(T) es invariante}
	$PRP(T)$ is $T-$ invariant, this is $T(PRP(T))\subseteq PRP(T)$.
\end{lemma}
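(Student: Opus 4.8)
The plan is to show directly that if $\omega\in PRP(T)$ then the forward orbit of $T\omega$, namely $\{T^n(T\omega)\}_{n\geq 0}=\{T^{n+1}\omega\}_{n\geq 0}$, again satisfies the repetition property. The guiding idea is that passing from $\omega$ to $T\omega$ merely shifts the orbit by one index, so any repetition block that works for $\omega$ over a sufficiently long initial window also works for $T\omega$ once the first term is discarded. Thus the statement should follow by unwinding Definition \ref{PRP} and reindexing, with no appeal to compactness, continuity of $T$, or the $G_\delta$ structure established in Lemma \ref{PRP es G_delta}.

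Concretely, I would fix $\varepsilon>0$ and $r\in\mathbb{Z}_+$ and look for a $q$ witnessing \textit{RP} for the shifted sequence $\{T^{n+1}\omega\}_{n\geq 0}$. Since $\omega\in PRP(T)$, I apply Definition \ref{PRP} to $\omega$ with the same $\varepsilon$ but with the enlarged parameter $r+1$, obtaining $q\in\mathbb{Z}_+$ such that $d(T^m\omega,T^{m+q}\omega)<\varepsilon$ for every $m\in\{0,1,\ldots,(r+1)q\}$. Writing $m=n+1$, the inequalities I actually need for the shifted orbit are exactly those with $m$ ranging over $\{1,2,\ldots,rq+1\}$, since $d(T^n(T\omega),T^{n+q}(T\omega))=d(T^{n+1}\omega,T^{n+1+q}\omega)$.

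The single point that must be verified is the containment of index windows, namely that $\{1,\ldots,rq+1\}\subseteq\{0,\ldots,(r+1)q\}$. This reduces to the inequality $rq+1\le (r+1)q=rq+q$, i.e. $q\ge 1$, which holds because $q\in\mathbb{Z}_+$. Consequently $d(T^n(T\omega),T^{n+q}(T\omega))<\varepsilon$ for all $n\in\{0,\ldots,rq\}$, so the same $q$ witnesses \textit{RP} for $T\omega$ with parameters $\varepsilon$ and $r$. As $\varepsilon>0$ and $r\in\mathbb{Z}_+$ were arbitrary, $T\omega\in PRP(T)$, which yields $T(PRP(T))\subseteq PRP(T)$.

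I expect no genuine obstacle here: the entire content is the bookkeeping of index ranges, and the one trick that makes everything align is inflating $r$ to $r+1$ before invoking the hypothesis, so that even after the one-step shift the required window still sits inside the window guaranteed for $\omega$. The invariance is therefore purely combinatorial once the definition of $PRP(T)$ is unwound.
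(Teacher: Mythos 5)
Your proof is correct and follows essentially the same route as the paper: both arguments invoke the repetition property for $\omega$ with an inflated window parameter and then reindex by the one-step shift, checking that $rq+1\le(r+1)q$. The only cosmetic difference is that the paper works with the coupled form $\varepsilon=1/k$, $r=k$ and replaces $k$ by $k+1$ (thereby also shrinking $\varepsilon$), whereas you keep $\varepsilon$ fixed and enlarge only $r$, which is marginally cleaner.
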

\begin{proof}
	Given $\omega\in PRP(T)$, let us show $T\omega\in PRP(T)$, \textit{i.e.} for all $k\in\mathbb{Z}_+$ there exists $q$ such that $d(T^n(T\omega), T^{n+q}(T\omega))<\frac{1}{k}$, for $0\leq n\leq kq$.\\
	
	If $\omega\in PRP(T)$, then there exists $q\in\mathbb{Z}_+$ such that $d(T^n\omega, T^{n+q}\omega)<\frac{1}{k+1}$, for $0\leq n\leq (k+1)q$. this implies in particular that for $0\leq n\leq kq$:
	\begin{equation*}
		d(T^{n+1}\omega, T^{n+1+q}\omega)<\frac{1}{k+1}\quad\Rightarrow\quad d(T^n(T\omega), T^{n+q}(T\omega))<\frac{1}{k}
	\end{equation*}
	then, $\{T^n(T\omega)\}_{n\geq0}$ satisfies \textit{RP}, so $T\omega\in PRP(T)$.
\end{proof}

Consequently, we have that $\{T^n\omega\}_{n\geq0}\subseteq PRP(T)$ for all $\omega\in PRP(T)$.

\begin{definition}
\label{repeticionTopyMet}
	(BD, \cite{damanikBase} \textit{p.650}). 
 We say that the dynamical system $\{\Omega,T\}$ satisfies:
 
	\begin{itemize}
		\item[\textit{(i)}] \textit{Topological repetition property (TRP)}, if $PRP(T)$ is dense on $\Omega$. 
		\item[\textit{(ii)}] \textit{Metric repetition property (MRP)}, if $\mu(PRP(T))>0$, where $(\Omega, \mathcal{B}, \mu)$ is a measure space.
		\item[\textit{(iii)}] \textit{Global repetition (GRP)}, if $PRP(T)=\Omega$.
	\end{itemize}
\end{definition}

To conclude this section, in the next lemmas we will demonstrate that \textit{TRP} and \textit{MRP} are sufficient conditions to guarantee that the set $PRP(T)$ is generic from the topological and measure-theoretical point of view, respectively, on $\Omega$.

\begin{lemma}
\label{TRP implica residual}
	If $\{\Omega, T\}$ satisfies \textit{TRP}, then the set $PRP(T)$ is residual into $\Omega$.\end{lemma}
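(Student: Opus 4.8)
The plan is to combine two facts already established about $PRP(T)$ with the Baire category theorem, after which the statement follows almost formally. Recall from Definition \ref{repeticionTopyMet} that the hypothesis \textit{TRP} means precisely that $PRP(T)$ is dense in $\Omega$. On the other hand, Lemma \ref{PRP es G_delta} shows that $PRP(T)$ is a $G_\delta$ set, so we may write
\[
PRP(T)=\bigcap_{j\geq1}U_j,
\]
where each $U_j$ is an open subset of $\Omega$. The observation that makes everything work is that each $U_j$ is in fact \emph{dense}: since $PRP(T)\subseteq U_j$ and $PRP(T)$ is dense by \textit{TRP}, its superset $U_j$ is dense as well.

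The second step is to pass from ``dense $G_\delta$'' to ``residual.'' Taking complements,
\[
\Omega\setminus PRP(T)=\bigcup_{j\geq1}(\Omega\setminus U_j),
\]
and each $\Omega\setminus U_j$ is closed with empty interior, i.e.\ nowhere dense, precisely because $U_j$ is open and dense. Hence $\Omega\setminus PRP(T)$ is a countable union of nowhere dense sets, that is, a meager (first category) set. By definition this says that $PRP(T)$ is residual in $\Omega$, which is the desired conclusion.

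The only point requiring care is that this conclusion be nonvacuous, which amounts to $\Omega$ being a Baire space. I would record this explicitly: $\Omega$ is a compact metric space, compactness implies completeness, and the Baire category theorem guarantees that a complete metric space is of second category in itself. I expect this to be the sole genuine (though entirely routine) obstacle; once Baireness is noted, the argument is purely formal, resting only on Lemma \ref{PRP es G_delta} and the density supplied by \textit{TRP}.
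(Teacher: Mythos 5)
Your proposal is correct and follows essentially the same route as the paper: invoke Lemma \ref{PRP es G_delta} for the $G_\delta$ structure, use \textit{TRP} for density, and conclude residuality. You merely spell out the standard ``dense $G_\delta$ implies residual'' step (each open set in the intersection is dense, so the complement is meager) that the paper treats as immediate by definition.
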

\begin{proof}
	By lemma (\ref{PRP es G_delta}) the set {\it PRP(T)} is $G_\delta$. Now, using the hypothesis then $PRP(T)$ is dense, we conclude that $PRP(T)$ is $G_\delta$ and dense, and therefore, by definition, a residual on $\Omega$.
 \end{proof}

\begin{lemma}
\label{transitividad implica TRP}
	Given $\alpha\in PRP(T)$, such that $\{T^n\alpha\}_{n\geq0}$ is dense in $\Omega$. Then:	
	\begin{itemize}
		\item[\textit{(i)}] System $\{\Omega, T\}$ satisfies \textit{TRP}.
		\item[\textit{(ii)}] Given $q\in\mathbb{Z}$, the set $\{T^{q+j}\alpha\}_{j\geq0}$ is dense in $\Omega$.
	\end{itemize}
\end{lemma}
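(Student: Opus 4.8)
The plan is to treat the two parts separately, deriving (i) directly from the invariance of $PRP(T)$ established earlier, and (ii) from a pull-back argument that exploits the fact that $T$ is a homeomorphism.

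For part (i), recall that by Lemma \ref{PRP(T) es invariante} the set $PRP(T)$ is $T$-invariant, and the remark following that lemma gives $\{T^n\alpha\}_{n\geq 0}\subseteq PRP(T)$ whenever $\alpha\in PRP(T)$. Since by hypothesis $\{T^n\alpha\}_{n\geq 0}$ is dense in $\Omega$, the set $PRP(T)$ contains a dense subset and is therefore itself dense. By Definition \ref{repeticionTopyMet}(i) this is precisely the statement that $\{\Omega,T\}$ satisfies \textit{TRP}, so no further work is required here.

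For part (ii), I would fix $q\in\mathbb{Z}$ and let $U\subseteq\Omega$ be an arbitrary nonempty open set; the goal is to produce some $j\geq 0$ with $T^{q+j}\alpha\in U$. The idea is not to push the tail of the orbit forward but to pull $U$ back. Since $T$ is a homeomorphism, $T^q$ is a homeomorphism as well, so $V:=(T^q)^{-1}(U)$ is a nonempty open subset of $\Omega$ (nonempty because $T^q$ is a bijection, open because $T^q$ is continuous). Density of the \emph{full} forward orbit $\{T^n\alpha\}_{n\geq 0}$ then yields an index $m\geq 0$ with $T^m\alpha\in V$, and applying $T^q$ gives $T^{q+m}\alpha=T^q(T^m\alpha)\in U$. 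Taking $j=m\geq 0$ shows that $\{T^{q+j}\alpha\}_{j\geq 0}$ meets $U$, and since $U$ was arbitrary this set is dense. I note that this single argument works uniformly for positive, negative, and zero $q$, and that it uses only the transitivity of $\alpha$ together with the invertibility of $T$; the membership $\alpha\in PRP(T)$ plays no role in (ii).

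The one place that deserves care—and the only genuine obstacle—is part (ii) in the case $q>0$, i.e.\ showing that discarding the finitely many initial iterates $\alpha,T\alpha,\dots,T^{q-1}\alpha$ cannot destroy density. A naive attempt to keep the orbit inside $U$ by pushing it forward fails, because a dense orbit could a priori visit a small open set only at early times. The homeomorphism hypothesis is exactly what circumvents this: replacing $U$ by its open preimage $(T^q)^{-1}(U)$ and then invoking density of the forward orbit automatically returns a visit at a time $\geq q$. Once this pull-back trick is identified, the remaining verifications are immediate and both parts follow.
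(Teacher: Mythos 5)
Your proposal is correct. Part (i) is essentially identical to the paper's argument: invariance of $PRP(T)$ (Lemma \ref{PRP(T) es invariante}) puts the dense orbit inside $PRP(T)$, which is then dense, i.e.\ \textit{TRP} holds. Part (ii), however, takes a genuinely different route. The paper argues forward: it sets $A=\{\alpha,T\alpha,\dots,T^{q-1}\alpha\}$, picks a ball $B_{\delta_0}(\omega)$ disjoint from the finite set $A$, and concludes that any orbit point landing in that ball must have index $j\geq q$. You instead pull the target open set $U$ back to $V=(T^{q})^{-1}(U)$, apply density of the full forward orbit to $V$, and push the resulting visit forward by $T^{q}$. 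Your version buys two things: it works uniformly for $q$ positive, negative, or zero (the paper's set $A$ only makes sense for $q\geq 1$), and it sidesteps a small gap in the paper's argument, namely the case where the point $\omega$ being approximated itself belongs to $A$, in which case no ball centered at $\omega$ is disjoint from $A$ and the paper's choice of $\delta_0$ does not exist. The trade-off is that your argument leans on $T$ being a homeomorphism (so that $(T^{q})^{-1}(U)$ is open and nonempty), whereas the paper's only needs finiteness of $A$ plus continuity; in the setting of this paper $T$ is a homeomorphism by standing assumption, so nothing is lost. Your closing observation that $\alpha\in PRP(T)$ is irrelevant to (ii) is also accurate and matches the paper, which likewise uses only density of the orbit in that part.
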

\begin{proof}\textcolor{white}{Sea:}
	
	\textit{(i)}\quad Let $\alpha\in PRP(T)$. By lemma \ref{PRP(T) es invariante}, $\{T^n\alpha\}_{n\geq0}\subseteq PRP(T)$. Therefore, if $\{T^n\alpha\}_{n\geq0}$ is dense in $\Omega$, then \textit{PRP(T)} contains a dense subset of $\Omega$, and consequently \textit{PRP(T)} is dense in $\Omega$.\\
	
	\textit{(ii)}\quad Let $\omega\in\Omega$. Define the set: $A=\{\alpha, T\alpha,\ldots, T^{q-1}\alpha\}$. As $A$ is finite, there exists $\delta_0>0$ such that $B_{\delta_0}(\omega)$, i.e. the open ball centered at $\omega$ with radius $\delta_0$ is disjoint with $A$:
	\begin{equation*}
		B_{\delta_0}(\omega) \cap A=\emptyset
	\end{equation*}
	Given that, by hypothesis, the set $\{T^n\alpha\}_{n\geq0}$ is dense in $\Omega$, then there exists a $j\geq0$ such that $T^j\alpha\in B_{\delta_0}(\omega)$ and therefore:
	\begin{equation*}
		\{T^n\alpha\}_{n\geq0}\cap B_{\delta_0}(\omega)\neq\emptyset
	\end{equation*}
	As $B_{\delta_0}(\omega) \cap A=\emptyset$ and $T^j\alpha\in B_{\delta_0}(\omega)$, then $j\geq q$. We conclude that $\{T^{q+j}\alpha\}_{n\geq0}$ is dense in $\Omega$.\end{proof}

\begin{lemma}
\label{MRP implica medida completa}
	If $\{\Omega, T\}$ satisfies \textit{MRP} and $T$ is $\mu-$ergodic, then $\mu(PRP(T))=\mu(\Omega)$.
\end{lemma}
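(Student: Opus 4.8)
The plan is to deploy the ergodic dichotomy: for a $\mu$-ergodic (hence $\mu$-preserving) transformation, every measurable set that is invariant modulo $\mu$-null sets has measure either $0$ or $\mu(\Omega)$. The hypothesis \emph{MRP} asserts precisely that $\mu(PRP(T))>0$, which rules out the first alternative, so the conclusion $\mu(PRP(T))=\mu(\Omega)$ will follow as soon as I verify that $PRP(T)$ falls under the dichotomy. Two ingredients already established make this possible: $PRP(T)$ is measurable, being a $G_\delta$ set by Lemma \ref{PRP es G_delta} and hence Borel; and $PRP(T)$ is forward invariant, $T(PRP(T))\subseteq PRP(T)$, by Lemma \ref{PRP(T) es invariante}.

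The core algebraic step is to convert the forward invariance into preimage language. From $T(PRP(T))\subseteq PRP(T)$ we read off that $\omega\in PRP(T)$ implies $T\omega\in PRP(T)$, which is exactly the inclusion $PRP(T)\subseteq T^{-1}(PRP(T))$. Now I would invoke that $T$ preserves $\mu$, so $\mu\bigl(T^{-1}(PRP(T))\bigr)=\mu(PRP(T))$. Combining the inclusion with the equality of measures forces $\mu\bigl(T^{-1}(PRP(T))\setminus PRP(T)\bigr)=0$, and since the reverse set difference is empty, $\mu\bigl(T^{-1}(PRP(T))\,\triangle\,PRP(T)\bigr)=0$. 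Thus $PRP(T)$ is invariant modulo a $\mu$-null set, and ergodicity yields $\mu(PRP(T))\in\{0,\mu(\Omega)\}$; together with \emph{MRP} this gives $\mu(PRP(T))=\mu(\Omega)$.

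The main obstacle is that the invariance handed to us by Lemma \ref{PRP(T) es invariante} is only the one-sided inclusion $T(PRP(T))\subseteq PRP(T)$, whereas the cleanest formulation of ergodicity asks for the two-sided relation $T^{-1}(PRP(T))=PRP(T)$. The resolution, which is the heart of the argument, is that measure-preservation upgrades a one-sided set inclusion into an equality up to a null set: the inclusion $PRP(T)\subseteq T^{-1}(PRP(T))$ combined with $\mu\bigl(T^{-1}(PRP(T))\bigr)=\mu(PRP(T))$ leaves no room for a positive-measure difference. One should be explicit that ``$\mu$-ergodic'' is understood here to include $\mu$-invariance of $T$ (guaranteed in our setting since $T$ is a homeomorphism of the compact space $\Omega$ and $\mu$ is the invariant probability measure), since it is exactly this property that the upgrade relies on.
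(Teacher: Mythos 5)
Your proof is correct, and it is genuinely more direct than the one in the paper. The paper does not apply the ergodic dichotomy to $PRP(T)$ itself; instead it introduces the auxiliary set $B=\bigcup_{n\geq 0}T^{-n}(PRP(T))$, observes $T^{-1}B\subseteq B$, concludes $\mu(B)=1$ because $PRP(T)\subseteq B$ has positive measure, and then runs a second contradiction argument on $B\setminus PRP(T)$ (again via a one-sided inclusion $T^{-1}(B\setminus PRP(T))\subseteq B\setminus PRP(T)$ and the dichotomy) to squeeze out $\mu(PRP(T))=1$. Your observation that Lemma \ref{PRP(T) es invariante} already gives $PRP(T)\subseteq T^{-1}(PRP(T))$, so that measure preservation forces $\mu\bigl(T^{-1}(PRP(T))\,\triangle\,PRP(T)\bigr)=0$ and the dichotomy applies in one shot, collapses the paper's two-stage argument into a single application of the same principle and makes the auxiliary set $B$ unnecessary. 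Both proofs rest on exactly the same ingredients: measurability of $PRP(T)$ (it is $G_\delta$ by Lemma \ref{PRP es G_delta}), forward invariance (Lemma \ref{PRP(T) es invariante}), $\mu$-invariance of $T$, and the formulation of ergodicity that covers sets invariant only modulo null sets --- the paper uses this last point implicitly when it deduces $\mu(B)\in\{0,1\}$ from the mere inclusion $T^{-1}B\subseteq B$, so accepting the paper's argument commits one to accepting yours. Your explicit remark that the step $\mu(T^{-1}(PRP(T)))=\mu(PRP(T))$ requires $T$ to preserve $\mu$ (and that this must be read into the hypothesis ``$\mu$-ergodic'') is a precision the paper glosses over and is worth keeping.
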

\begin{proof}
	To simplify calculations, let us denote $PRP(T)=A$. Assume that the system $\{\Omega, T\}$ satisfies \textit{MRP} and $\mu(A)>0$. We will show that $\mu(A)=1$. Let
	\begin{equation*}
		B=\bigcup_{n\geq0}T^{-n}A
	\end{equation*}
	then $T^{-1}B=\bigcup_{n\geq1}T^{-n}A$, therefore $T^{-1}B\subseteq B$. This implies that $\mu(B)=0$ or $\mu(B)=1$, as $T$ is $\mu-$ ergodic. As $A\subseteq B$ and $\mu(A)>0$ by hypotesis, then $\mu(B)>0$, and therefore $\mu(B)=1$.\\
	Furthermore, because $T$ is invariant, $\mu(T^{-1}(B))=1$. We note that:
	\begin{equation*}
		\begin{aligned}
			1=\mu(T^{-1}(B))&=\mu(A)+\mu(T^{-1}(B)\setminus A)
		\end{aligned}
	\end{equation*}
	moreover
	\begin{equation*}
		\begin{aligned}
			1=\mu(B)&=\mu(A)+\mu(B\setminus A)
		\end{aligned}
	\end{equation*}
	then we have that $\mu(B\setminus A)=\mu(T^{-1}(B)\setminus A)$.\\
	
	Now suppose that $\mu(B\setminus A)>0$. Note that $T^{-1}(B\setminus A)\subseteq B\setminus A$, indeed:
	\begin{equation*}
		\begin{aligned}
			x\in T^{-1}(B\setminus A)\Rightarrow T(x)\in B\setminus A& \Rightarrow T(x) \in B \ \text{and} \ T(x)\notin A\\
			&\Rightarrow x \notin A &&\text{lemma \ref{PRP(T) es invariante}}\\
			&\Rightarrow x\in B\setminus A&&\text{because $T^{-1}B\subseteq B$}
		\end{aligned}
	\end{equation*}
	By ergodicity of $T$, we have that $\mu(B\setminus A)=1$, so $\mu(A)=0$, which contradicts the hypothesis $\mu(A)>0$. Therefore $\mu(B\setminus A)=0$, and we conclude that $\mu(A)=1$, \textit{i.e.} $\mu(PRP(T))=1$.
\end{proof}

 Boshernitzan and Damanik demonstrate that \textit{MRP} is sufficient condition for the continuous spectrum to be a generic property, from the measure-theoretical point of view, of the familiy of ergodic Schrödinger operators $\{H_\omega\}_{\omega\in\Omega}$ (\cite{damanikBase} \textit{theorem 2}, p.650). However, this result is beyond the scope of this paper. In the next section we will study Gordon's lemma, result that will allow us to show that the continuous spectrum is a generic property, from the topological point of view, of $\{H_\omega\}_{\omega\in\Omega}$.

\section{Gordon's lemma}
\label{Gordon}
Gordon's lemma is the main analytical tool that supports theorems \textbf{A} and \textbf{B}. In this section we present the definition of Gordon potential and study some properties of matrix representation of Schrödinger operators. We also provide a detailed proof of Gordon's lemma.
\subsection{Gordon potential}
\begin{definition}\label{Potencial de Gordon}
	A bounded function $V:\mathbb{Z}\rightarrow\mathbb{R}$ is a \textit{Gordon potential} if there exists a sequence of periodic functions $\{V_m\}_{m\in\mathbb{Z}_+}: \mathbb{Z}\rightarrow\mathbb{R}$, where $V_m(n)=V_m(n+T_m)$ and $T_m\rightarrow\infty$, that satisfies the following two conditions:
	\begin{itemize}
		\item[\textit{(i)}] $\sup_{n,m}|V_m(n)|<\infty$.
		
		\item[\textit{(ii)}] $\sup_{|n|\leq 2T_m}|V_m(n)-V(n)|\leq Cm^{-T_m}$, for a $C>0$.
	\end{itemize}
\end{definition}

A Gordon potential is essentially a function that, for $|n|\leq 2T_m$, can be approximated by a sequence of periodic functions. In addition to any periodic function, a nontrivial example of a Gordon potential is $V(n)=\sin(an)+\sin(bn)$ where $a$ and $b$ are such that $\frac{a}{b}\notin\mathbb{Q}$. Gordon's lemma, which is stated below, describes the spectral type of the discrete Schrödinger operator in $\ell^2(\mathbb{Z})$:
 \begin{equation}\label{Kap3Eq100}
   H[\psi(n)]=\psi(n+1)+\psi(n-1)+V(n)\psi(n)
 \end{equation}
 whose potential function $V(n)$ satisfies the definition \ref{Potencial de Gordon}.

\begin{theorem}
(Gordon's, lemma) Let $V:\mathbb{Z}\rightarrow\mathbb{R}$ be a Gordon potential. For $E\in\mathbb{C}$, if $\psi$ is a solution to:
	\begin{equation}\label{Kap3Eq9}
		\psi(n+1)+\psi(n-1)+V(n)\psi(n)=E\cdot \psi(n)
	\end{equation}
	Then:
	\begin{equation}\label{Kap3Eq10}
		\limsup_{|n|\rightarrow\infty}\frac{\psi(n+1)^2+\psi(n)^2}{\psi(1)^2+\psi(0)^2}\geq\frac{1}{4}
	\end{equation}
	\end{theorem}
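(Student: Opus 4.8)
The plan is to recast the scalar three-term recurrence \eqref{Kap3Eq9} as a first-order matrix system and then play the approximate periodicity of $V$ (Definition \ref{Potencial de Gordon}) against a single algebraic inequality in $SL(2,\mathbb{C})$. For a solution $\psi$ of \eqref{Kap3Eq9} set
\[
\Phi(n)=\begin{pmatrix}\psi(n+1)\\ \psi(n)\end{pmatrix},\qquad A(n)=\begin{pmatrix} E-V(n) & -1\\ 1 & 0\end{pmatrix},
\]
so that $\Phi(n)=A(n)\Phi(n-1)$ with $\det A(n)=1$. With the Euclidean norm on $\mathbb{C}^2$ one has $\|\Phi(n)\|^2=|\psi(n+1)|^2+|\psi(n)|^2$, so the quotient in \eqref{Kap3Eq10} is exactly $\|\Phi(n)\|^2/\|\Phi(0)\|^2$; we may assume $\Phi(0)\neq 0$, as otherwise $\psi\equiv 0$. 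Writing $M(a,b)=A(b)A(b-1)\cdots A(a+1)$ for the transfer matrix, $\Phi(b)=M(a,b)\Phi(a)$, and every such factor and product lies in $SL(2,\mathbb{C})$.

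The algebraic core, which I would isolate as a separate lemma, is that for every $M\in SL(2,\mathbb{C})$ and every $v\in\mathbb{C}^2$,
\[
\max\bigl\{\|Mv\|,\;\|M^{-1}v\|,\;\|M^{2}v\|\bigr\}\ \geq\ \tfrac12\,\|v\|.
\]
To see this, apply Cayley--Hamilton in the form $M^{2}-(\operatorname{tr}M)M+I=0$, which gives both $Mv+M^{-1}v=(\operatorname{tr}M)\,v$ and $M^{2}v+v=(\operatorname{tr}M)\,Mv$. If all three norms were strictly below $\tfrac12\|v\|$, the first identity forces $|\operatorname{tr}M|\,\|v\|\le\|Mv\|+\|M^{-1}v\|<\|v\|$, i.e.\ $|\operatorname{tr}M|<1$, and then the second gives $\|v\|\le|\operatorname{tr}M|\,\|Mv\|+\|M^{2}v\|<\tfrac12\|v\|+\tfrac12\|v\|=\|v\|$, a contradiction.

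Next I would introduce the periodic approximants $V_m$ of period $T_m$ from Definition \ref{Potencial de Gordon}, with transfer matrices $A_m(n)$ and monodromy matrix $M_m=A_m(T_m)\cdots A_m(1)$. By $T_m$-periodicity of $V_m$, the exact periodic dynamics sends $\Phi(0)$ to $\Phi(T_m),\Phi(2T_m),\Phi(-T_m)$ through $M_m, M_m^{2}, M_m^{-1}$ respectively (for the last one, $\det M_m=1$ makes inversion the adjugate map, an isometry in the Frobenius norm, so it may be compared entrywise). Condition \textit{(i)} and boundedness of $V$ and $E$ yield a uniform bound $\|A(n)\|,\|A_m(n)\|\le K$ for $|n|\le 2T_m$, while condition \textit{(ii)} gives $\|A(n)-A_m(n)\|=|V(n)-V_m(n)|\le Cm^{-T_m}$ there. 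A telescoping estimate for products of matrices then yields, for instance,
\[
\|M(0,T_m)-M_m\|\ \le\ T_m\,K^{\,T_m-1}\,C\,m^{-T_m},
\]
and the same for the $2T_m$ and $-T_m$ blocks. The decisive point, and the main obstacle, is exactly this: the monodromy matrices can grow like $K^{T_m}$, yet the super-exponential rate $m^{-T_m}$ in condition \textit{(ii)} overwhelms it, so that for $m$ large (say $m>2K^{2}$) these differences tend to $0$ as $m\to\infty$. This is precisely why the Gordon condition is stated with the rate $m^{-T_m}$.

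Finally I would combine the two ingredients. Applying the algebraic inequality to $M=M_m$ and $v=\Phi(0)$, together with the perturbation bounds, gives
\[
\max\bigl\{\|\Phi(T_m)\|,\;\|\Phi(-T_m)\|,\;\|\Phi(2T_m)\|\bigr\}\ \ge\ \tfrac12\|\Phi(0)\|-\varepsilon_m,
\]
with $\varepsilon_m\to 0$. Hence for each large $m$ some index $n_m\in\{T_m,-T_m,2T_m\}$ satisfies $\|\Phi(n_m)\|/\|\Phi(0)\|\ge \tfrac12-\varepsilon_m/\|\Phi(0)\|$, and since $T_m\to\infty$ forces $|n_m|\to\infty$, passing to the limit superior yields
\[
\limsup_{|n|\to\infty}\frac{|\psi(n+1)|^{2}+|\psi(n)|^{2}}{|\psi(1)|^{2}+|\psi(0)|^{2}}\ \ge\ \Bigl(\tfrac12\Bigr)^{2}=\tfrac14,
\]
which is \eqref{Kap3Eq10}.
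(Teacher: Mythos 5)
Your proposal is correct and follows essentially the same route as the paper's own proof: rewrite the difference equation with transfer matrices, bound $\lVert A_m(n)\cdots A_m(1)-A(n)\cdots A(1)\rVert$ by a telescoping sum (the paper's Theorem \ref{Aux1}), and extract the lower bound $\tfrac12\lVert\Psi(0)\rVert$ from a Cayley--Hamilton inequality for $2\times2$ matrices (the paper's Theorem \ref{Aux2}, which argues by cases on the largest characteristic coefficient over the four powers $a=\pm1,\pm2$, where your trace-based version gets away with three). Your explicit error term $\varepsilon_m\to0$ in the final comparison is in fact slightly more careful than the paper's, which passes directly from $\max_a\lVert\Psi_m(aT_m)\rVert\geq\tfrac12\lVert\Psi(0)\rVert$ to the same bound for $\Psi$ without recording the vanishing correction.
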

	Equation \ref{Kap3Eq10} implies that $\lim_{|n|\rightarrow\infty}\psi(n)>0$. Therefore:
	\begin{equation}\label{Kap3Eq110}
	  \lVert\psi(n)\rVert^2=\sum_{n\in\mathbb{Z}}|\psi(n)|^2=\infty
	\end{equation}
	as a consequence of equation \ref{Kap3Eq110} we conclude that $\psi(n)\notin\ell^2(\mathbb{Z})$, therefore the operator $H$ in $\ell^2(\mathbb{Z})$ (equation \ref{Kap3Eq100}) has no eigenvalues (\textit{i.e.} its point spectrum is empty). The following theorem provides a characterization of Gordon's potential that will be useful later on the demonstration of Theorems A and B.

\begin{theorem}\label{Equivalencia potencial de Gordon}
A bounded function $V:\mathbb{Z}\rightarrow\mathbb{R}$ is a Gordon potential if and only if there exists a sequence $\{q_m\}_{m\in\mathbb{Z}_+}$ such that $q_m\rightarrow\infty$ and for a $C>0$ and every $m\geq1$:
	\begin{equation*}
		\max_{1\leq n\leq q_m}|V(n)-V(n+q_m)|\leq Cm^{-q_m} \quad \text{and} \quad \max_{1\leq n\leq q_m}|V(n)-V(n-q_m)|\leq Cm^{-q_m}
	\end{equation*}
\end{theorem}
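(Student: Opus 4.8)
The plan is to prove the two implications separately, writing $M:=\sup_{n}|V(n)|<\infty$ throughout (finite because $V$ is bounded by hypothesis). For the implication ($\Rightarrow$), suppose $V$ is a Gordon potential and let $\{V_m\}$ be periodic approximants of periods $T_m\to\infty$ satisfying conditions \textit{(i)} and \textit{(ii)} of Definition \ref{Potencial de Gordon}. I would simply set $q_m:=T_m$, so that $q_m\to\infty$. The point is that for $1\le n\le q_m$ both $n$ and $n\pm q_m$ lie in the window $|n|\le 2T_m$ on which \textit{(ii)} provides control, while the $T_m$-periodicity of $V_m$ forces $V_m(n\pm q_m)=V_m(n)$. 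Inserting $V_m$ and applying the triangle inequality gives
\[
|V(n)-V(n\pm q_m)|\le |V(n)-V_m(n)|+|V_m(n\pm q_m)-V(n\pm q_m)|\le 2Cm^{-T_m},
\]
since the intermediate difference $V_m(n)-V_m(n\pm q_m)$ vanishes. Taking the maximum over $1\le n\le q_m$ and absorbing the factor $2$ into the constant yields both required inequalities, so this direction is routine.

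For the converse ($\Leftarrow$), assume the two inequalities hold along a sequence $q_m\to\infty$ with constant $C$, and build the approximants by periodization: let $V_m$ be the unique $q_m$-periodic function with $V_m(n)=V(n)$ for $n\in\{1,\dots,q_m\}$, and set $T_m:=q_m\to\infty$. Condition \textit{(i)} is immediate, as $|V_m(n)|\le M$ for all $n,m$. Everything rests on condition \textit{(ii)}, namely on estimating $V_m(n)-V(n)$ uniformly over the window $|n|\le 2T_m$. On the central block $\{1,\dots,q_m\}$ the difference is $0$; on the two adjacent blocks $\{q_m+1,\dots,2q_m\}$ and $\{-q_m+1,\dots,0\}$ one reduces $V_m(n)$ to $V$ on the central block by a single $q_m$-shift and bounds the difference directly by the first, respectively the second, hypothesis, obtaining $Cm^{-q_m}$.

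The step I expect to be the main obstacle is controlling the \emph{outer} part of the window. The range $|n|\le 2T_m$ spans about two periods on each side, whereas the hypotheses only compare the central block with its two immediate neighbours; the crude bound $|V_m(n)-V(n)|\le 2M$ available on the far blocks does not see the decay $m^{-q_m}$. The plan is therefore to write $V(n)-V(n+jq_m)$ as a telescoping sum of one-period differences and to bound each summand by the appropriate inequality, so that the total error on any block meeting the window is a fixed multiple of $m^{-q_m}$; since the number of such blocks is bounded independently of $m$ and $q_m\to\infty$, the rate $m^{-q_m}$ is preserved after renaming the constant. The delicate issue to be checked carefully is that every block intersecting $|n|\le 2T_m$ can indeed be reached by such a chain built solely from the two given one-period estimates, since the hypotheses pin down only the central block relative to its neighbours; this bookkeeping, rather than any single inequality, is the real content of the converse.
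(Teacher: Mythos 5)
Your forward direction is correct and essentially identical to the paper's: insert the periodic approximant, use $V_m(n\pm T_m)=V_m(n)$, apply the triangle inequality, and absorb the factor $2$ into the constant. No issues there.

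The converse is where the problem lies, and it lies exactly at the step you flagged as delicate. With $T_m=q_m$ and $V_m$ the $q_m$-periodization of $V$ restricted to $\{1,\dots,q_m\}$, the window $|n|\le 2T_m$ meets the blocks $\{1,\dots,q_m\}$, $\{q_m+1,\dots,2q_m\}$, $\{1-q_m,\dots,0\}$, and also the points $-2q_m\le n\le -q_m$. The first three are handled exactly as you describe. But the telescoping plan for the remaining points cannot be completed: to compare $V(n)$ with $V(n-2q_m)$ for $n\in\{1,\dots,q_m\}$ you would chain $|V(n)-V(n-q_m)|$ (controlled) with $|V(n-q_m)-V(n-2q_m)|$, and the latter is of the form $|V(n')-V(n'-q_m)|$ with $n'\in\{1-q_m,\dots,0\}$, outside the range $1\le n'\le q_m$ where the hypotheses apply. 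The only controlled one-period comparisons are (central block, right neighbour) and (central block, left neighbour); no chain built solely from these reaches a block two periods away from the fundamental domain, and shifting the fundamental domain of the periodization only relocates, never removes, the unreachable block. So the converse as proposed has a genuine gap --- and, for what it is worth, the paper's own proof has the same one: it verifies the estimate only on $\{1-q_m,\dots,2q_m\}$ and then asserts it for all $|n|\le 2q_m$ without comment. The clean repairs are either to use the standard three-block form of Definition \ref{Potencial de Gordon} (approximation required only for $1-T_m\le n\le 2T_m$, which still suffices for Gordon's lemma because the Cayley--Hamilton step only needs the powers $a\in\{-1,1,2\}$), in which case your argument closes with no telescoping at all, or to strengthen the hypothesis of the theorem so that $|V(n)-V(n\pm q_m)|$ is controlled for all $1-q_m\le n\le q_m$ (equivalently, to add the two-period comparisons).
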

\begin{proof}
``$\Rightarrow$'' Suppose that $V(n)$ is a Gordon potential. Using triangular inequality and the fact that $V_m$ is a periodic function with period $T_m$:
	\begin{equation*}
		\begin{aligned}
			\max_{1\leq n\leq T_m}|V(n)-V(n\pm T_m)|&=\max_{1\leq n\leq T_m}|V(n)-V_m(n)+V_m(n\pm T_m)-V(n\pm T_m)|\\
			&\leq \sup_{1\leq n\leq T_m}|V_m(n\pm T_m)-V(n\pm T_m)|+\sup_{1\leq n\leq T_m}|V_m(n)-V(n)|\\
			&\leq 2Cm^{-T_m}
		\end{aligned}
	\end{equation*}
	by taking $q_m=T_m$ and $\hat{C}=2C$, we conclude that:
	\begin{equation*}
		\max_{1\leq n\leq q_m}|V(n)-V(n\pm q_m)|\leq\hat{C}m^{-qm}
	\end{equation*}
	where $\lim_{m\rightarrow\infty}q_m=\infty$.\\
	
	``$\Leftarrow$'' Suppose that there exists $C>0$ such that $\max_{1\leq n\leq q_m}|V(n)-V(n\pm q_m)|\leq Cm^{-q_m}$ for a sequence of positive integers $\{q_m\}_{m\in\mathbb{Z}_+}$ such that $q_m\rightarrow\infty$. We want to construct, for each $m\in\mathbb{Z}$, a periodic function $V_m:\mathbb{Z}\rightarrow\mathbb{R}$ with period $T_m$, such that $\sup_{n,m}|V_m(n)|<\infty$ and $\sup_{|n|\leq 2T_m}|V_m(n)-V(n)|\leq Cm^{-T_m}$.
 
  \quad
	
	Let $m\in\mathbb{Z}$. By hypothesis, for each $n\in\{1,\ldots, q_m\}$ there exist $r_1(m)$ y $r_2(m)$ such that:
	\begin{equation*}
		\begin{aligned}
			V(n-q_m)+r_1(m)&=V(n), \quad&&|r_1(m)|\leq Cm^{-q_m}\quad\text{y}\\
			V(n)+r_2(m)&=V(n+q_m), \quad&& |r_2(m)|\leq Cm^{-q_m}
		\end{aligned}
	\end{equation*}
  	Initially consider the function $V_m: \{1-q_m,\ldots,2q_m\}\rightarrow\mathbb{R}$ defined as:
	\begin{equation*}
		V_m(n)=
		\begin{cases}
			V(n)+r_1(m) \quad&\text{if $1-q_m\leq n\leq 0$}\\
			V(n) \quad&\text{if $1\leq n\leq q_m$}\\
			V(n)-r_2(m) \quad&\text{if $q_m+1\leq n\leq 2q_m$}
		\end{cases}
	\end{equation*}
	For all $n\in\{1,\ldots,q_m\}$ the function $V_m$ is periodic:
	\begin{equation*}
		\begin{aligned}
			V_m(n+q_m)&=V(n+q_m)-r_2(m) &&\quad\text{definition of $V_m(n)$, because $q_m+1\leq n+q_m\leq 2q_m$}\\
			&=V(n)&&\quad\text{by hypothesis}\\
			&=V_m(n)&&\quad\text{definition of $V_m(n)$}
		\end{aligned}
	\end{equation*}
	Similarly:	
	\begin{equation*}
		\begin{aligned}
			V_m(n-q_m)&=V(n-q_m)+r_1(m) &&\quad\text{definition of $V_m(n)$ because $1-q_m\leq n-q_m\leq 0$}\\
			&=V(n)&&\quad\text{by hypothesis}\\
			&=V_m(n)&&\quad\text{definition of $V_m(n)$}
		\end{aligned}
	\end{equation*}
	therefore $V_m$ is periodic, with period $q_m$.
 
  \quad
  
  Function $V_m$ is extended to $\mathbb{Z}$ by establishing, for each $j\in\mathbb{Z}: V_m(j)=V_m(n)$, where $n\in\{1,\ldots q_m\}$ and $j\equiv_{q_m}n$. We conclude that $V_m: \mathbb{Z}\rightarrow\mathbb{Z}$ is a periodic function with period $q_m$.
	
	\quad
	
	Additionally, since the function $V:\mathbb{Z}\rightarrow \mathbb{R}$ is bounded, there exists $K$ such that $V(n)\leq K$ for all $n\in\mathbb{Z}$. Let $r(m)=\max\{r_1(m), r_2(m)\}$, note that for all $n$ and $m\in\mathbb{Z}$:
	\begin{equation*}
		\begin{aligned}
			|V_m(n)|\leq |V(n)|+|r(m)|\leq K+Cm^{-q_m}<\infty
		\end{aligned}
	\end{equation*}
	then $\sup_{n,m}|V_m(n)|<\infty$. Finally:
	\begin{equation*}
		|V_m(n)-V(n)|\leq |r(m)|\leq Cm^{-q_m} \Rightarrow \sup_{|n|\leq 2q_m}|V_m(n)-V(n)|\leq Cm^{-q_m}
	\end{equation*}
	Taking $q_m=T_m$, it is concluded that the sequence of functions $\{V_m\}_{m\in\mathbb{Z}^+}$ satisfies definition \ref{Potencial de Gordon} and therefore $V(n)$ is a Gordon potential.
\end{proof}

\subsection{Matrix representation of Schrödinger operators}
Let $V(n)$ be a Gordon potential and $\Psi(n)$ the column vector $(\psi(n), \psi(n+1))$ where $\psi(n)$ is a solution to the equation:
\begin{equation}\label{Kap3Eq2}
	\psi(n+1)+\psi(n-1)+V(n)\psi(n)=E\psi(n)
\end{equation}
which means that $E\in\mathbb{C}$ is an eigenvalue of $H$ (equation \ref{Kap3Eq100}) with given initial condition $\Psi(0)$. For $n>0$, equation \eqref{Kap3Eq2} can be written in matrix form:
\begin{equation}\label{Kap3Eq3}
	\Psi(n)=A(n)\cdots A(1)\Psi(0), \quad\text{where}\quad A(n)=\begin{pmatrix}
		0&1\\-1&E-V(n)
	\end{pmatrix}
\end{equation}
Similarly, let $\Psi_m(n)=(\psi_m(n), \psi_m(n+1))$ and consider the equation:
\begin{equation*}
	\psi_m(n-1)+\psi_m(n+1)+V_m(n)\psi_m(n)=E\psi_m(n)
\end{equation*}
with initial condition $\Psi_m(0)=\Psi(0)$. Therefore:
\begin{equation}\label{Kap3Eq4}
	\Psi_m(n)=A_m(n)\cdots A_m(1)\Psi(0), \quad\text{where}\quad A_m(n)=\begin{pmatrix}
		0&1\\-1&E-V_m(n)
	\end{pmatrix}
\end{equation}
In the following two lemmas we study some properties of $A_m(n)$ and $A(n)$.

\begin{theorem}\label{Aux1}
Let $A(n)$ and $A_m(n)$ be according to the equations \eqref{Kap3Eq3} and \eqref{Kap3Eq4}. Then:
	\begin{equation*}
		\lVert A_m(n)\cdots A_m(1)-A(n)\cdots A(1)\rVert\leq n\cdot[\sup_{m,j}\lVert A_m(j)\rVert]^{n-1}\cdot[\sup_{1\leq j\leq n}\lVert A_m(j)-A(j)\rVert]
	\end{equation*}
\end{theorem}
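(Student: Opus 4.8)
The statement is a standard estimate for the difference of two ordered matrix products, and the plan is to prove it by a telescoping decomposition combined with submultiplicativity of the operator norm. To lighten notation, write $M=\sup_{m,j}\lVert A_m(j)\rVert$ and $\delta=\sup_{1\leq j\leq n}\lVert A_m(j)-A(j)\rVert$, so that the claimed bound reads $\lVert A_m(n)\cdots A_m(1)-A(n)\cdots A(1)\rVert\leq n\,M^{n-1}\delta$. Before starting I would record one preliminary observation that makes the single constant $M$ suffice for both families: since $V_m(j)\to V(j)$ as $m\to\infty$ for each fixed $j$, the matrices satisfy $A_m(j)\to A(j)$, whence $\lVert A(j)\rVert=\lim_{m\to\infty}\lVert A_m(j)\rVert\leq M$. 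Thus every factor appearing below, whether an $A_m(j)$ or an $A(j)$, has operator norm at most $M$.

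The core of the argument is the telescoping identity
\begin{equation*}
	A_m(n)\cdots A_m(1)-A(n)\cdots A(1)=\sum_{k=1}^{n}\bigl[A_m(n)\cdots A_m(k+1)\bigr]\bigl[A_m(k)-A(k)\bigr]\bigl[A(k-1)\cdots A(1)\bigr],
\end{equation*}
with the convention that the empty products (the left bracket when $k=n$ and the right bracket when $k=1$) equal the identity matrix. This identity is verified directly: splitting each summand along $A_m(k)-A(k)$ and reindexing, consecutive terms cancel in pairs, leaving only the two extreme products. Equivalently, one may prove the bound by induction on $n$, splitting off the leading factor as $A_m(n)\cdots A_m(1)-A(n)\cdots A(1)=A_m(n)\bigl[A_m(n-1)\cdots A_m(1)-A(n-1)\cdots A(1)\bigr]+\bigl[A_m(n)-A(n)\bigr]A(n-1)\cdots A(1)$ and applying the inductive hypothesis to the first bracket; the base case $n=1$ is immediate.

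To finish, I would take norms in the telescoping identity and apply the triangle inequality followed by submultiplicativity. In the $k$-th summand the left bracket is a product of $n-k$ matrices of the $A_m$ family, hence has norm at most $M^{n-k}$; the middle factor has norm at most $\delta$ by definition; and the right bracket is a product of $k-1$ matrices of the $A$ family, hence has norm at most $M^{k-1}$ by the preliminary observation. Multiplying, each summand is bounded by $M^{n-k}\,\delta\,M^{k-1}=M^{n-1}\delta$, independently of $k$, and summing the $n$ terms yields $\lVert A_m(n)\cdots A_m(1)-A(n)\cdots A(1)\rVert\leq n\,M^{n-1}\delta$, which is the claim. There is no serious obstacle here; the only point requiring a moment's care is that the estimate is stated with the single constant $M=\sup_{m,j}\lVert A_m(j)\rVert$, so one must justify (as above) that the $A(j)$ factors are also controlled by $M$.
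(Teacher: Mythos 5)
Your proof is correct and follows essentially the same route as the paper: a telescoping decomposition of the difference of products, followed by the triangle inequality and submultiplicativity, bounding each of the $n$ summands by $M^{n-1}\delta$. The only cosmetic difference is that the paper places the $A$ factors on the left and the $A_m$ factors on the right of each summand, and it handles the bound on $\lVert A(j)\rVert$ by simply declaring $\sup_{m,j}\lVert A_m(j)\rVert$ to range over both families, whereas you justify it via the convergence $A_m(j)\to A(j)$; both resolutions are fine.
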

\begin{proof}
Matrix $A_m(n)\cdots A_m(1)-A(n)\cdots A(1)$ can be written as a telescopic sum:
	\begin{equation}\label{Kap3Eq5}
		\begin{aligned}
			&A_m(n)\cdots A_m(1)-A(n)\cdots A(1)\\&=[A_m(n)-A(n)]\cdot[A_m(n-1)\cdots A_m(1)]\\
			&+[A(n)]\cdot[A_m(n-1)-A(n-1)]\cdot[A_m(n-2)\cdots A_m(1)]+\cdots\\
			&+[A(n)\cdots A(n-j+1)]\cdot[A_m(n-j)-A(n-j)]\cdot[A_m(n-j-1)\cdots A_m(1)]+\cdots\\
			&+[A(n)\cdots A(2)]\cdot[A_m(1)-A(1)]
		\end{aligned}
	\end{equation}
	Since $\lVert AB\rVert\leq\lVert A\rVert\cdot\lVert B\rVert=\lVert B\rVert\cdot\lVert A\rVert$, the norm of each summand on the right-hand side of the equation \eqref{Kap3Eq5} can be bounded as follows, for all $0\leq j\leq n-1$:
	\begin{equation}\label{Kap3Eq6}
		\begin{aligned}
			&\lVert [A(n)\cdots A(n-j+1)]\cdot[A_m(n-j)-A(n-j)]\cdot[A_m(n-j-1)\cdots A_m(1)]\rVert\\
			&\leq \lVert [A(n)\cdots A(n-j+1)]\rVert \cdot\lVert[A_m(n-j)-A(n-j)\rVert\cdot\lVert[A_m(n-j-1)\cdots A_m(1)]\rVert\\
			&=\lVert [A(n)\cdots A(n-j+1)]\rVert\cdot\lVert[A_m(n-j-1)\cdots A_m(1)]\rVert\lVert[A_m(n-j)-A(n-j)]\rVert\\
			&\leq
			[\sup_{m,j}\lVert A_m(j)\rVert]^{n-1}\cdot[\sup_{1\leq j\leq n}\lVert A_m(j)-A(j)\rVert]
		\end{aligned}
	\end{equation}
	In the above equation:
	\begin{equation*}
		\sup_{m,j}\lVert A_m(j)\rVert=\sup_{1\leq j\leq n}\{\lVert A_m(j)\rVert, \lVert A(j)\rVert\}
	\end{equation*}
	From the equations \eqref{Kap3Eq5} and \eqref{Kap3Eq6} we obtain:
	\begin{equation*}
		\lVert A_m(n)\cdots A_m(1)-A(n)\cdots A(1)\rVert\leq n[\sup_{m,j}\lVert A_m(j)\rVert]^{n-1}[\sup_{1\leq j\leq n}\lVert A_m(j)-A(j)\rVert]
\end{equation*}
\end{proof}
\begin{theorem}\label{Aux2}
Let $x$ be a vector such that $\lVert x\rVert=1$ and $B$ an invertible $2\times2$ matrix. Then:
	\begin{equation*}
		\max_{a=\pm1,\pm2}\lVert B^ax\rVert\geq\frac{1}{2}
	\end{equation*}
	In particular, for $\Psi_m(n)=A_m(n)\cdots A_m(1)\Psi(0)$, we have that:
	\begin{equation*}
		\max_{a=\pm1,\pm2}\lVert \Psi_m(aT_m)\rVert\geq\frac{1}{2}\lVert\Psi(0)\rVert
	\end{equation*}	
\end{theorem}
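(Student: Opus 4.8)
The plan is to reduce the statement to two algebraic identities for the one-period transfer matrix coming from the Cayley--Hamilton theorem. First I would set $B=A_m(T_m)\cdots A_m(1)$ and observe that each factor $A_m(j)=\left(\begin{smallmatrix}0&1\\-1&E-V_m(j)\end{smallmatrix}\right)$ has determinant $1$, so $\det B=1$ and $B$ is invertible in $SL(2,\CC)$. Writing $t=\operatorname{tr}B$, the characteristic polynomial is $\lambda^2-t\lambda+1$, and Cayley--Hamilton gives $B^2-tB+I=0$. Multiplying by $B^{-1}$ yields the first identity $B+B^{-1}=tI$. Since $B^{-1}=tI-B$, a direct computation of $B^{-2}=(tI-B)^2$ together with $B^2=tB-I$ gives the second identity $B^2+B^{-2}=(t^2-2)I$.

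With these two relations in hand I would argue by contradiction. Let $x$ be a unit vector and suppose, toward a contradiction, that each of $\lVert Bx\rVert,\ \lVert B^{-1}x\rVert,\ \lVert B^2x\rVert,\ \lVert B^{-2}x\rVert$ is strictly less than $\tfrac12$. Applying $B+B^{-1}=tI$ to $x$ and using the triangle inequality gives $|t|=\lVert Bx+B^{-1}x\rVert\leq\lVert Bx\rVert+\lVert B^{-1}x\rVert<1$, hence $t^2<1$. On the other hand, applying $B^2+B^{-2}=(t^2-2)I$ to $x$ gives $|t^2-2|=\lVert B^2x+B^{-2}x\rVert\leq\lVert B^2x\rVert+\lVert B^{-2}x\rVert<1$; but $t^2<1$ forces $|t^2-2|=2-t^2>1$, a contradiction. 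Therefore $\max_{a=\pm1,\pm2}\lVert B^a x\rVert\geq\tfrac12$, which is the first assertion.

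For the ``in particular'' claim I would take $x=\Psi(0)/\lVert\Psi(0)\rVert$ (the case $\Psi(0)=0$ being trivial, as both sides vanish). Because $V_m$ has period $T_m$, the transfer matrices satisfy $A_m(n+T_m)=A_m(n)$, so propagation over one period forward is governed by $B$ and over one period backward by $B^{-1}$; thus $\Psi_m(T_m)=B\Psi(0)$, $\Psi_m(2T_m)=B^2\Psi(0)$, $\Psi_m(-T_m)=B^{-1}\Psi(0)$, and $\Psi_m(-2T_m)=B^{-2}\Psi(0)$. Multiplying the inequality $\max_{a=\pm1,\pm2}\lVert B^a x\rVert\geq\tfrac12$ through by $\lVert\Psi(0)\rVert$ then gives $\max_{a=\pm1,\pm2}\lVert\Psi_m(aT_m)\rVert\geq\tfrac12\lVert\Psi(0)\rVert$.

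I do not expect a genuine obstacle here: the heart of the argument is the single trace identity $B^2+B^{-2}=(t^2-2)I$, and the whole contradiction uses only the triangle inequality, so it is valid for any vector norm. The only points requiring care are bookkeeping: verifying that the backward iterates $\Psi_m(-T_m)$ and $\Psi_m(-2T_m)$ are exactly $B^{-1}\Psi(0)$ and $B^{-2}\Psi(0)$, which is where the $T_m$-periodicity of $V_m$ (and hence of $A_m(\cdot)$) is essential, and recording that $\det B=1$ is what puts $B$ in $SL(2,\CC)$ and makes Cayley--Hamilton collapse to the clean form above.
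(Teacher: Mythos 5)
Your argument is correct in substance but proves a narrower statement than the one claimed, and it takes a genuinely different route from the paper. The paper does not assume $\det B=1$: it writes the characteristic polynomial as $a_2\lambda^2+a_1\lambda+a_0$, splits into three cases according to which $|a_i|$ is maximal, and in each case multiplies the Cayley--Hamilton identity by a suitable power of $B$ so that it reads $x=-(c\,B^{a}x+c'B^{b}x)$ with $|c|,|c'|\leq 1$; the triangle inequality then gives $\max\{\lVert B^{a}x\rVert,\lVert B^{b}x\rVert\}\geq\frac{1}{2}$ for the relevant pair $(a,b)$. That argument works for every invertible $2\times2$ matrix, which is what the theorem asserts. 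Your proof instead exploits $\det B=1$ to collapse Cayley--Hamilton to the two trace identities $B+B^{-1}=tI$ and $B^2+B^{-2}=(t^2-2)I$ and derives a contradiction from the triangle inequality. This is cleaner and arguably more illuminating, since it isolates the single quantity $t=\operatorname{tr}B$ that controls everything, and because the only matrix to which the lemma is ever applied is the one-period transfer matrix $B_m(T_m)\in SL(2,\CC)$, nothing downstream is lost. But as a proof of the theorem as literally stated (``$B$ an invertible $2\times2$ matrix'') it has a gap: for $\det B\neq 1$ neither identity holds and your contradiction does not go through. Either restrict the statement to $\det B=1$ or adopt the paper's three-case analysis for the general claim.

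Two smaller points. Since $E\in\CC$, the trace $t$ may be complex, so ``$t^2<1$'' should read ``$|t|^2<1$'' and the last step should invoke the reverse triangle inequality, $|t^2-2|\geq 2-|t|^2>1$; the contradiction survives unchanged. Your handling of the ``in particular'' clause --- periodicity of $A_m$ giving $\Psi_m(aT_m)=B^{a}\Psi(0)$ for $a=\pm1,\pm2$, followed by homogeneity of the norm --- matches the paper's, and you are right that the backward iterates are the only place where bookkeeping care is required.
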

\begin{proof}
Cayley-Hamilton theorem states that if $q(\lambda)=\sum_{k=0}^{n}a_k\lambda^k$ is the characteristic polynomial of a linear transformation $T$ in a vector space $V$ of dimension $n$, then $q(A)=0$, where $A$ is the square $n\times n$ matrix associated to the operator $T$ (Axler, \citeyear{axler2}. \textit{theorem 8.37}).
	%El teorema de Cayley-Hamilton afirma que toda matriz cuadrada satisface su polinomio característico (Axler, \citeyear{axler2}. Theorem 8.37).
	Therefore, if $q(\lambda)=a_2\lambda^2+a_1\lambda+a_0$ is the characteristic polynomial associated with the matrix $B_{2\times2}$, then:
	\begin{equation}\label{Kap3Eq7}
		a_2B^2+a_1B+a_0=0
	\end{equation}
	To demonstrate theorem \ref{Aux2} we consider three cases. First $|a_2|=\max_{0\leq i\leq 2}|a_i|$. Multiplying right-hand side of the equation \eqref{Kap3Eq7} by $\frac{1}{a_2}B^{-2}x$, and making $c_1=\frac{a_1}{a_2}$ y $c_0=\frac{a_0}{a_2}$, we obtain:
	\begin{equation*}
		x+c_1B^{-1}x+c_0B^{-2}x=0
	\end{equation*}
	where $|c_1|, |c_0|\leq1$. Taking the norm of the above expression:
	\begin{equation*}
		\begin{aligned}
			\lVert x\rVert=\lVert c_1B^{-1}x+c_0B^{-2}x\rVert \quad&\Rightarrow\quad
			1\leq |c_1|\cdot\lVert B^{-1}x\rVert+|c_0|\cdot\lVert B^{-2}x\rVert\\
			&\Rightarrow\quad\max\{\lVert B^{-1}x\rVert, \lVert B^{-2}x\rVert\}\geq\frac{1}{2}
		\end{aligned}
	\end{equation*}
	The second case is $|a_1|=\max_{0\leq i\leq 2}|a_i|$. Multiplying equation \eqref{Kap3Eq7} by $\frac{1}{a_1}B^{-1}x$:
	\begin{equation*}
		d_2Bx+x+d_0B^{-1}x=0
	\end{equation*}
	where $|d_2|=\frac{a_2}{a_1}\leq1$ y $|d_0|=\frac{a_0}{a_1}\leq1$. Then:
	\begin{equation*}
		\begin{aligned}
			\lVert x\rVert=\lVert d_2Bx+d_0B^{-1}x\rVert \quad&\Rightarrow\quad
			1\leq |d_2|\cdot\lVert Bx\rVert+|d_0|\cdot\lVert B^{-1}x\rVert\\
			\quad&\Rightarrow\quad\max\{\lVert Bx\rVert, \lVert B^{-1}x\rVert\}\geq\frac{1}{2}
		\end{aligned}
	\end{equation*}
	The third case is $|a_0|=\max_{0\leq i\leq 2}|a_i|$. An analogous reasoning to the one just presented, multiplying now the equation \eqref{Kap3Eq7} by $\frac{1}{a_0}x$ allows us to affirm that:
	\begin{equation*}
		\max\{\lVert B^2x\rVert, \lVert Bx\rVert\}\geq\frac{1}{2}
	\end{equation*}
	It is concluded that invertible $2\times 2$ matrix $B$ satisfies inequality:
	\begin{equation}\label{Kap3Eq8}
		\max_{a=\pm1,\pm2}\lVert B^ax\rVert\geq\frac{1}{2}
	\end{equation}
	Let $B_m(n)=A_m(n)\cdots A_m(1)=\Pi_{j=1}^{n}A_m(j)$.
 
 Note that $A_m(n)=A_m(n+T_m)$, since $V_m(n)=V_m(n+T_m)$. Then:
	%De acuerdo con la ecuación \eqref{Kap4Eq5}, cada matriand $A_m(by)$ es invertible para $1\leq n\leq T_m$, pues su determinante es igual a 1. De otra parte
	\begin{equation*}
		\begin{aligned}
			{B_m^a(T_m)}=(\Pi_{j=1}^{T_m}A_m(j))^a&=\Pi_{j=1}^{T_m}A_m(j)\cdots\Pi_{j=1}^{T_m}A_m(j)\\
			&=\Pi_{j=1}^{T_m}A_m(j)\cdot \Pi_{j=T_m+1}^{2T_m}A_m(j)\cdots\Pi_{j=(a-1)(T_m+1)}^{aT_m}A_m(j)\\
			&=\Pi_{j=1}^{aT_m}A_m(j)\\
			&=B_m(aT_m)
		\end{aligned}
	\end{equation*}
	Replacing $x$ by $\frac{\Psi(0)}{\lVert\Psi(0)\rVert}$ and $B$ by $B_m(T_m)$ in the equation \eqref{Kap3Eq8}:
	\begin{equation*}
		\begin{aligned}
			\max_{a=\pm1,\pm2}\lVert B_m^a(T_m)\Psi(0)\rVert = \max_{a=\pm1,\pm2}\lVert B_m(aT_m)\Psi(0)\rVert\geq\frac{1}{2}\lVert\Psi(0)\rVert
		\end{aligned}\end{equation*}
\end{proof}
 
%What has been exposed so far provides us with the tools to demonstrate Gordon's lemma in the following section.

\subsection{Proof of Gordon's lemma}
\label{demostración lema de Gordon}
\begin{proof}
This proof is based on Simon's suggestions (\cite{simon} theorem 7.1, p.476). On the one hand, note that:
	\begin{equation}\label{Kap3Eq11}
		\begin{aligned}
			\lVert \Psi_m(n)-\Psi(n)\rVert&=\lVert [A_m(n)\cdots A_m(1)-A(n)\cdots A(1)]\cdot\Psi(0)\rVert\\
			&\leq \lVert (A_m(n)\cdots A_m(1)-A(n)\cdots A(1))\rVert\cdot\lVert\Psi(0)\rVert\\
			&\leq n\cdot[\sup_{m,j}\lVert A_m(j)\rVert]^{n-1}\cdot[\sup_{1\leq j\leq n}\lVert A_m(j)-A(j)\rVert]\cdot \lVert\Psi(0)\rVert
		\end{aligned}
	\end{equation}
	where the last inequality is justified in the theorem \ref{Aux1}. On the other hand:
	\begin{equation*}
		A_m(j)-A(j)=\begin{pmatrix}
			0&0\\0&V(n)-V_m(n)
		\end{pmatrix}\quad\Rightarrow\quad\lVert A_m(j)-A(j)\rVert\leq|V(n)-V_m(n)|
	\end{equation*}
	By assumption $V(n)$ is a Gordon potential. Therefore:
	\begin{equation}\label{Kap3Eq12}
		\sup_{|n|\leq 2T_m}|V_m(n)-V(n)|\leq Cm^{-T_m}\quad\Rightarrow\quad\lim_{m\rightarrow\infty}\sup_{|n|\leq 2T_m}\lVert A_m(j)-A(j)\rVert=0
	\end{equation}
	From the equations \eqref{Kap3Eq11} and \eqref{Kap3Eq12} we obtain:
	\begin{equation*}
		\sup_{|n|\leq 2T_m} \lVert \Psi_m(n)-\Psi(n)\rVert\rightarrow0, \quad\text{as }m\rightarrow\infty
	\end{equation*}
	then in particular:
	\begin{equation*}
		\max_{a=\pm1,\pm2}\lVert\Psi(aT_m)-\Psi_m(aT_m)\rVert\rightarrow0, \quad\text{as }m\rightarrow\infty
	\end{equation*}
	As $\max_{a=\pm1,\pm2}\lVert \Psi_m(aT_m)\rVert\geq\frac{1}{2}\lVert\Psi(0)\rVert$ (theorem \ref{Aux2}), it follows from the above equation that:
	\begin{equation}\label{Kap3Eq13}
		\max_{a=\pm1,\pm2}\lVert \Psi(aT_m)\rVert\geq\frac{1}{2}\lVert \Psi(0)\rVert
	\end{equation}
	Consequently:
	\begin{equation*}
		\begin{aligned}
			\limsup_{|n|\rightarrow\infty}\lVert \Psi(n)\rVert\geq \max_{a=\pm1,\pm2}\lVert \Psi(aT_m)\rVert\quad&\Rightarrow\quad\limsup_{|n|\rightarrow\infty}\lVert \Psi(n)\rVert\geq\frac{1}{2}\lVert \Psi(0)\rVert &&\quad\text{by equation \eqref{Kap3Eq13}}\\&\Rightarrow\quad\limsup_{|n|\rightarrow\infty} \frac{\lVert \Psi(n)\rVert^2}{\lVert \Psi(0)\rVert^2}\geq \frac{1}{4} &&\quad\text{since $\lVert \Psi(0)\rVert>0$}\\
			&\Rightarrow \quad\frac{\psi(n+1)^2+\psi(n)^2}{\psi(1)^2+\psi(0)^2}\geq\frac{1}{4} &&\quad\text{definition of $\lVert \Psi(n)\rVert$}
		\end{aligned}
	\end{equation*}
	
 %&\Rightarrow \quad\limsup_{|n|\rightarrow\infty}\lVert \Psi(n)\rVert^2\geq\frac{1}{4}\lVert \Psi(0)\rVert^2 &&\quad\text{rasing to square power}\\
 
 Therefore: 
	\begin{equation}\label{Kap3Eq14}
		\limsup_{|n|\rightarrow\infty}\frac{\psi(n+1)^2+\psi(n)^2}{\psi(1)^2+\psi(0)^2}\geq\frac{1}{4}
	\end{equation}
\end{proof}

\section{Proof of theorem A}
\label{Proof of theorem A}

\begin{proof}
	We will show that $f(T^n\omega)$ is a Gordon potential for all $f$ in a residual set $\mathcal{F}$ of $C(\Omega)$ and $\omega$ in a residual set $\Omega_f$ of $\Omega$. Recall that the real function $V(n)=f(T^n\omega)$ is a Gordon potential if there exists a sequence of positive integers $q_m\rightarrow\infty$ and a $C>0$ such that (theorem \ref{Equivalencia potencial de Gordon}):
	\begin{equation}\label{Kap4Eq7}
		\begin{aligned}
			\max_{1\leq n\leq q_m}|f(T^n\omega)-f(T^{n+q_m}\omega)|&\leq Cm^{-q_m} \quad\text{and:}\\
			\max_{1\leq n\leq q_m}|f(T^n\omega)-f(T^{n-q_m}\omega)|&\leq Cm^{-q_m}
		\end{aligned}
	\end{equation}
	Demonstration is divided into three steps. Firstly, we construct a residual set $\mathcal{F}\subseteq C(\Omega)$. Secondly, for each $f\in\mathcal{F}$ we construct a residual set $\Omega_f\subset\Omega$. Finally, we show that for every $f\in\mathcal{F}$ and $\omega\in\Omega_f$ the function $f(T^n\omega)$ satisfies both inequalities of the equation \eqref{Kap4Eq7} and therefore is a Gordon potential.\\
	
	\textit{(1) Construction of a residual set $\mathcal{F}\subseteq C(\Omega)$:}
	
	Let $\alpha\in PRP(T)$ \textit{i.e.} $\{T^j\alpha\}_{j\geq0}$ satisfies \textit{RP} and thus, for each $k\in\mathbb{Z}_+$ exists a $q_k$ such that:
	\begin{equation*}
		d(T^j\alpha, T^{j+q_k}\alpha)<\frac{1}{k},\quad\text{for $0\leq j\leq 3q_k$}
	\end{equation*}
	where $\lim_{k\rightarrow\infty}q_k=\infty$ (lemma \ref{q infinito}).\\
	
	For each $k\in\mathbb{Z}_+$ let $B_k=B_k(\alpha,r(k))$ be the open ball centered at $\alpha$ with radius $r(k)$. As $T^j$ is a homeomorphism on $\Omega$ then $\{T^j(B_k)\}_{j=1}^{4q_k}$ is a sequence of open sets in $\Omega$. We represent the union of elements of this sequence as:
		\begin{equation*}
			\begin{aligned}
				&\bigcup_{j=1}^{q_k}T^j(B_k) \cup T^{q_k+j}(B_k) \cup T^{2q_k+j}(B_k) \cup T^{3q_k+j}(B_k)=\bigcup_{j=1}^{q_k}\bigcup_{l=0}^{3}T^{j+lq_k}(B_k)
			\end{aligned}
	\end{equation*}
	Radius $r(k)$ of the open ball $B_k$ centered at $\alpha$ is considered sufficiently small for the following two conditions to be met:
	\begin{itemize}
		\item[\textbf{(a)}] $\overline{T^i(B_k)}\cap \overline{T^j(B_k)}=\emptyset,\quad\forall i,j\in \{1,\ldots,4q_k\}$\quad for $i\neq j$.
		
		\item[\textbf{(b)}] For each $1\leq j\leq q_k$ we have that $\bigcup_{l=0}^{3}T^{j+lq_k}(B_k)$ is contained in a ball of radius $\frac{4}{k}$.
	\end{itemize}
	Imposing the condition \textbf{(a)} is possible since by hypothesis $T$ is a homeomorphism in $\Omega$ and also we are considering a finite number of iterates of $T$. Regarding the condition \textbf{(b)}, note that as $\{T^j\alpha\}_{j\geq0}$ satisfies \textit{RP} then for each $k\in\mathbb{Z}$ we have that:
 
 %(figura \ref{fig:DemoTeo1}):
	\begin{equation*}
		\begin{aligned}[c]
				d(T^j\alpha,T^{j+q_k}\alpha)&<\frac{1}{k}\\
				d(T^{j+q_k}\alpha,T^{j+2q_k}\alpha)&<\frac{1}{k}\\
				d(T^{j+2q_k}\alpha,T^{j+3q_k}\alpha)&<\frac{1}{k}
		\end{aligned}
			\qquad\Rightarrow\qquad
		\begin{aligned}[c]
			d(T^j\alpha,T^{j+lq_k}\alpha)&<\frac{3}{k} \quad\text{for $0\leq l\leq 3$}
		\end{aligned}
	\end{equation*}

\begin{figure}[h]
  \centering
  \includegraphics[width=0.6\linewidth]{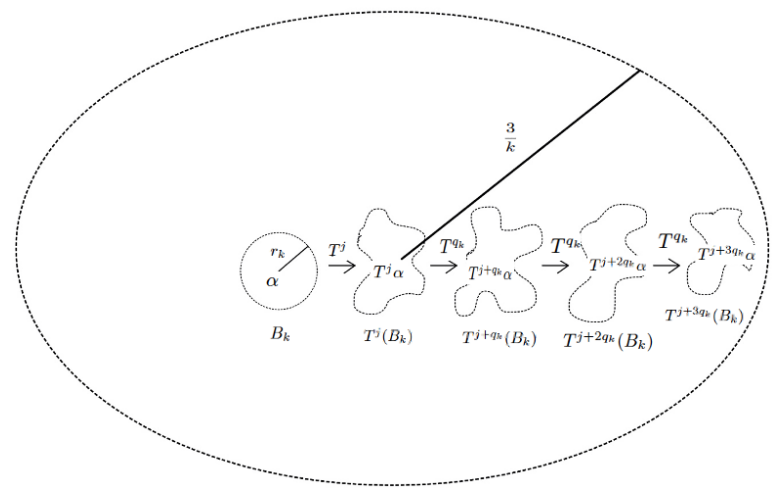}
  \caption{{\small Illustration of conditions \textbf{(a)} and \textbf{(b)}.}}
  \label{fig:DemoTeo1}
\end{figure}

  The utility of conditions \textbf{(a)} and \textbf{(b)} is that they guarantee that the set $\mathcal{F}$ constructed below is residual in $C(\Omega)$. Initially, consider the set:
	\begin{equation*}
		\mathcal{C}_k=\{g\in C(\Omega)\mid \text{$g$ is constant on $\bigcup_{l=0}^3T^{j+lq_k}(B_k)$ for each $j: 1\leq j\leq q_k$}\}
	\end{equation*}
	We define the set $\mathcal{F}_k$ as the open neighborhoods of radio $k^{-q_k}$ centered at $g\in\mathcal{C}_k$:
	\begin{equation}\label{Kap4Eq8}
		\mathcal{F}_k=\{f\in C(\Omega)\mid |f-g|=\sup_{x\in \Omega}|f(x)-g(x)|<k^{-q_k}, \quad\text{para}\quad g\in \mathcal{C}_k\}
	\end{equation}
	For each $m\in\mathbb{Z}_+$ consider:
	\begin{equation*}
		\hat{\mathcal{F}_m}=\bigcup_{k\geq m}\mathcal{F}_k
	\end{equation*}
	The set $\hat{\mathcal{F}_m}$ is open, because it is the countable union of open sets in $C(\Omega)$.
%%%%%%%%%%%%%%%%%%%%%%%%%%%%%%%%%%%%%%%%%%%%%%%%%%%%%%%%
%%%%%%%%%%%%%%%%%%%%%%%%%%%%%%%%%%%%%%%%%%%%%%%%%%%%%%%%%%%%
	Additionally $\hat{\mathcal{F}_m}$ is dense in $C(\Omega)$: consider $h\in C(\Omega)$, we will construct a $g\in\hat{\mathcal{F}_m}$ arbitrarily close to $h$. As $\Omega$ is compact, then $h$ is uniformly continuous. Let $x,y\in \bigcup_{l=0}^3T^{j+lq_k}(B_k)$, In accordance with condition \textbf{(b)}: $d(x,y)<4/k$. This implies, due to the uniform continuity of $h$, that $|h(x)-h(y)|\rightarrow0$ for $k\in\mathbb{Z}$ arbitrarily large. For a given $\hat{x}\in \bigcup_{l=0}^3T^{j+lq_k}(B_k)$, define the function:
	\begin{equation}\label{Kap4Eq9}
		g(x)=
		\begin{cases}
			h(\hat{x}) & \text{ if}\quad x\in \bigcup_{l=0}^3T^{j+lq_k}(B_k)\\
			h(x) & \text{ if} \quad x \notin \bigcup_{l=0}^3T^{j+lq_k}(B_k)
		\end{cases}
	\end{equation}
	It follows that $g\in \mathcal{C}_k$ and $h\in \hat{\mathcal{F}_m}$ for every $m\geq1$ therefore $\hat{\mathcal{F}_m}$ is dense in $C(\Omega)$. We conclude that for each $m\in\mathbb{Z}_+$, the set $\hat{\mathcal{F}_m}$ is open and dense in $C(\Omega)$. Therefore:
	\begin{equation*}
		\begin{aligned}
			\mathcal{F}&=\bigcap_{m\geq1}\hat{\mathcal{F}_m}=\bigcap_{m\geq1}\bigcup_{k\geq m}\mathcal{F}_k %\\
			%&=\bigcap_{m\geq1}\bigcup_{k\geq m}\mathcal{F}_k
		\end{aligned}
	\end{equation*}
	is a residual set in $C(\Omega)$ since it is the countable intersection of open and dense sets in $C(\Omega)$.
	
	\quad

	\textit{(2) Construction of a residual set $\Omega_f\subseteq\Omega$:}
	
	Let $f\in\mathcal{F}$. By definition of the set $\mathcal{F}$ in the previous step of this proof, there exists a sequence of positive integers $k_l$, with $\lim_{l\rightarrow\infty}k_l=\infty$, such that $f$ belongs to each $\hat{\mathcal{F}_{k_l}}$ (equation \ref{Kap4Eq8}). Moreover, for each $k_l$ there exists a $q_{k_l}$ such that:
	\begin{equation*}
		d(T^j\alpha, T^{j+q_{k_l}}\alpha)<\frac{1}{k_l}, \quad\text{para}\quad1\leq j\leq k_l\cdot q_{k_l}
	\end{equation*}
	where $\lim_{l\rightarrow\infty}q_{k_l}=\infty$, by lemma \ref{q infinito}. According to lemma \ref{transitividad implica TRP} the set $\{T^{q+j}\alpha\}_{j\geq1}$ is dense in $\Omega$ for every $q\in\mathbb{Z}$, then in particular $\{T^{q_{k_l}+j}\alpha\}_{j\geq1}$ is dense in $\Omega$ for each $q_{k_l}$. The union of elements of the latter sequence is represented, for each $m\geq1$, as:
	\begin{equation*}
		\bigcup_{l\geq m}\bigcup_{j=1}^{q_{k_l}}T^{q_{k_l}+j}(\alpha), \quad\text{where:}\quad\bigcup_{l\geq m}\bigcup_{j=1}^{q_{k_l}}T^{q_{k_l}+j}(\alpha)\subseteq \bigcup_{l\geq m}\bigcup_{j=1}^{q_{k_l}}T^{q_{k_l}+j}(B_{k_l})
	\end{equation*}

	The above equation is justified by the fact that $\alpha\in B_{k_l}$, where $B_{k_l}$ is an open ball, in accordance with step \textit{(1)} of this proof. Therefore, the set:
	\begin{equation*}
		\Omega_{f,m}=\bigcup_{l\geq m}\bigcup_{j=1}^{q_{k_l}}T^{q_{k_l}+j}(B_{k_l})
	\end{equation*}
	is dense in $\Omega$ as it contains $\{T^{q_{k_l}+j}\alpha\}_{j\geq1}$, which is dense in $\Omega$. We conclude that:
	\begin{equation*}
		\Omega_f=\bigcap_{m\geq1} \Omega_{f,m}=\bigcap_{m\geq1}\bigcup_{l\geq m}\bigcup_{j=1}^{q_{k_l}}T^{j+q_{k_l}}(B_{k_l})
	\end{equation*}
	is the countable intersection of open and dense sets, and therefore residual subset of $\Omega$.
	
	\quad
	
	\textit{(3) We show that $f(T^n\omega)$ is a Gordon potential, for every $f\in\mathcal{F}$ and every $\omega\in\Omega_f$:}
	
	Let $f\in\mathcal{F}$ and $\omega\in\Omega_f$. Since $\omega\in\Omega_f$, there exists a sequence $k_l\rightarrow\infty$ such that:
	\begin{equation*}
		\omega\in\bigcup_{j=1}^{q_{k_l}}T^{j+q_{k_l}}(B_{k_l})
	\end{equation*}
	Therefore, for each ${k_l}$ exists a $\hat{j}$ ($1\leq\hat{j}\leq q_{k_l}$) such that $\omega\in T^{\hat{j}+q_{k_l}}(B_{k_l})$. This implies that for each $j$, with $1\leq j\leq q_{k_l}$:
	\begin{equation*}
		T^j\omega\in T^{\hat{j}+j+q_{k_l}}(B_{k_l}),\quad 		T^{j+q_{k_l}}\omega\in T^{\hat{j}+j+2q_{k_l}}(B_{k_l})\quad\text{and}\quad 		T^{j-q_{k_l}}\omega\in T^{\hat{j}+j}(B_{k_l})
	\end{equation*}
	Let $\bar{j}=\hat{j}+j$. Therefore:
	\begin{equation*}
		T^j\omega, \quad T^{j+q_{k_l}}\omega \quad\text{and}\quad T^{j-q_{k_l}}\omega\quad\in\bigcup_{1\leq \bar{j}\leq q_{k_l}}\bigcup_{l=0}^{3}T^{\bar{j}+lq_{k_l}}(B_{k_l}), \quad \forall j:\quad1\leq j\leq q_{k_l}
	\end{equation*}
	Consider a function $g\in\mathcal{C}_{k_l}$ . This means, by the definition of the set $\mathcal{C}_{k_l}$ in the previous step of this proof, that $g$ is constant on $\bigcup_{1\leq \bar{j}\leq q_{k_l}}\bigcup_{l=0}^{3}T^{\bar{j}+lq_{k_l}}(B_{k_l})$. In particular:
	\begin{equation*}
		g(T^{j}\omega)=g(T^{j+q_{k_l}}\omega)\quad\text{and}\quad g(T^{j}\omega)=g(T^{j-q_{k_l}}\omega)
	\end{equation*}
	Therefore, if $f\in\mathcal{F}$ then:
	\begin{equation*}
		\begin{aligned}
			|f(T^j\omega)-f(T^{j+q_{k_l}}\omega)|&=|f(T^j\omega)-g(T^j\omega)+g(T^{j+q_{k_l}}\omega)-f(T^{j+q_{k_l}}\omega)|&&\text{as $g(T^j\omega)=g(T^{j+q_{{k_l}}}\omega)$}\\
			&\leq |f(T^j\omega)-g(T^j\omega)|+|f(T^{j+q_{k_l}}\omega)-g(T^{j+q_{k_l}}\omega)|&&\text{by triangular inequality}\\
			&< {k_l}^{-q_{k_l}}+{k_l}^{-q_{k_l}}=2{k_l}^{-q_{k_l}}&&\text{as $f\in\mathcal{F}$}
		\end{aligned}
	\end{equation*}
	Similarly:
	\begin{equation*}
		\begin{aligned}
			|f(T^j\omega)-f(T^{j-q_{k_l}}\omega)|&=|f(T^j\omega)-g(T^j\omega)+g(T^{j-q_{k_l}}\omega)-f(T^{j-q_{k_l}}\omega)|&&\text{as $g(T^j\omega)=g(T^{j-q_{k_l}}\omega)$}\\
			&\leq |f(T^j\omega)-g(T^j\omega)|+|f(T^{j-q_{k_l}}\omega)-g(T^{j-q_{k_l}}\omega)|&&\text{by triangular inequality}\\
			&< {k_l}^{-q_{k_l}}+{k_l}^{-q_{k_l}}=2{k_l}^{-q_{k_l}}&&\text{as $f\in\mathcal{F}$}
		\end{aligned}
	\end{equation*}
	Consequently:
	\begin{equation*}
		|f(T^j\omega)-f(T^{j+q_{k_l}}\omega)|<2{k_l}^{-q_{k_l}} \quad\text{and}\quad|f(T^j\omega)-f(T^{j-q_{k_l}}\omega)|<2{k_l}^{-q_{k_l}}
	\end{equation*}
	
	The above inequality is satisfied for each $j\in\{1,\ldots,q_{k_l}\}$. This leads us to conclude that:
	\begin{equation*}
		\max_{1\leq j\leq q_{k_l}}|f(T^j\omega)-f(T^{j+q_{k_l}}\omega)|<2{k_l}^{-q_{k_l}}\quad\text{and}\quad\quad \max_{1\leq j\leq q_{k_l}}|f(T^j\omega)-f(T^{j-q_{k_l}}\omega)|<2{k_l}^{-q_{k_l}}
	\end{equation*}
	Taking $C=2$ in equation \eqref{Kap4Eq7} we conclude that $f(T^n\omega)$ is a Gordon potential.

 \end{proof}

\section{Proof of theorem B}\label{Proof of theorem B}

\begin{proof}
%	Suponga que el sistema $\{\Omega, T\}$ satisface \textit{TRP}, es decir que $\overline{PRP(T)}=\Omega$.
	We focus on the construction of the residual $\Omega_f$ set in $\Omega$, since the rest of the proof is analogous to that of \textbf{Theorem A}. Let $f\in\mathcal{F}$. This implies that for a subsequence of positive integers $k_l\rightarrow\infty$, $f$ belongs to each set $\mathcal{F}_{k_l}$ (equation \ref{Kap4Eq8}). Moreover, since by hypothesis the system $\{\Omega, T\}$ satisfies \textit{TRP}, we have that $\Omega$ can be covered as follows:
 
	\begin{equation*}
		\Omega\subseteq \bigcup_{\omega\in PRP(T)}B_{\varepsilon_\omega}\omega
	\end{equation*}
	where $B_{\varepsilon_\omega}$ are open balls centered at $\omega\in PRP(T)$ with radius $\varepsilon_\omega>0$. For each $m\in\mathbb{Z}_+$ we define the set:
	\begin{equation*}
		\Omega_{f,m}=\bigcup_{\omega\in PRP(T)}\bigcup_{l\geq m}\bigcup_{j=1}^{q_{k_l}}T^{j+q_{k_l}}(B_{r(k_l)}(\omega))
	\end{equation*}
  For each $m\in\mathbb{Z}_+$, the set $\Omega_{f,m}$ is open since it is the union of open sets. Additionally, $\Omega_{f,m}$ is dense in $\Omega$, since the system $\{\Omega, T\}$ satisfies \textit{TRP} and $T$ is a homeomorphism. Consequently: 
	\begin{equation*}
		\Omega_f=\bigcap_{m\geq 1}\Omega_{f,m}=\bigcap_{m\geq 1}\bigcup_{\omega\in PRP(T)}
\bigcup_{l\geq m}\bigcup_{j=1}^{q_{k_l}}T^{j+q_{k_l}}(B_{r(k_l)}(\omega))	\end{equation*}
  is a residual subset of $\Omega$ because it is the intersection of dense open sets. Lastly, for each $f\in\mathcal{F}$ and $\omega\in\Omega_{f,m}$, the function $f(T^n\omega)$ is a Gordon potential. The argument is the same as presented in the third step of the proof of \textbf{Theorem A} (section \ref{Proof of theorem A}).
  
  \end{proof}

\section{Applications}
\label{section4}
As application of theorems \textbf{A} and \textbf{B}, in this section we study the spectrum of quasi-periodic Schrödinger operators and operators whose potential function is determined by skew-shift on the torus. First, the following theorem allows us to determine the spectral type of a broader class of dynamical systems. 

\begin{theorem}
 \label{thc} 
 Let $(\Omega,d)$ be a compact metric space. Suppose that $T:\Omega\rightarrow\Omega$ satisfies the following two conditions:

\begin{itemize}
		\item[\textit{(i)}] $T$ is minimal: $\overline{\{T^n\omega\}}_{n\geq0}=\Omega, \quad \forall \omega\in\Omega$.
		\item[\textit{(ii)}] $T$ is an isometry: $d(\omega_1,\omega_2)=d(T\omega_1,T\omega_2), \quad \forall \omega_1,\omega_2\in\Omega$.
	\end{itemize}
	 Then the system $\{\Omega, T\}$ satisfies \textit{TRP}. Therefore, by \textbf{Theorem B}, for all $\omega$ in a residual subset of $\Omega$ and $f$ in a residual subset of $C(\Omega)$, the operator:
	\begin{equation}\label{Kap5Eq3}
		\begin{aligned}
			H_\omega: \ell^2(\mathbb{Z})&\rightarrow \ell^2(\mathbb{Z})\\
			\psi(n)&\mapsto \psi(n+1)+\psi(n-1)+f(T^n\omega)\psi(n)
		\end{aligned}
	\end{equation}
has purely continuous spectrum.
\end{theorem}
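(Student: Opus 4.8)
The plan is to reduce everything to verifying that $\{\Omega, T\}$ satisfies \textit{TRP} (Definition \ref{repeticionTopyMet}), since once this is established the statement about purely continuous spectrum of $H_\omega$ is an immediate consequence of \textbf{Theorem B}. In fact I expect to prove the stronger conclusion that $PRP(T)=\Omega$, i.e. that the system enjoys global repetition (\textit{GRP}), which trivially forces $PRP(T)$ to be dense and hence yields \textit{TRP}.

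The engine of the argument is the observation that the isometry hypothesis \textit{(ii)} collapses the $n$-dependence in Definition \ref{PRP}. Indeed, iterating \textit{(ii)} gives, for every $\omega\in\Omega$ and all $n,q\in\mathbb{Z}_+$,
\begin{equation*}
    d(T^n\omega, T^{n+q}\omega)=d\big(T^n\omega, T^n(T^q\omega)\big)=d(\omega, T^q\omega),
\end{equation*}
so the quantity controlled by the repetition property does not depend on $n$ at all. Consequently, for a fixed $\omega$, the condition ``$d(T^n\omega,T^{n+q}\omega)<\varepsilon$ for all $0\le n\le rq$'' is equivalent to the single inequality $d(\omega,T^q\omega)<\varepsilon$, and in particular the parameter $r$ becomes irrelevant.

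It therefore remains to show that each $\omega\in\Omega$ is \emph{recurrent}: for every $\varepsilon>0$ there is some $q\ge 1$ with $d(\omega,T^q\omega)<\varepsilon$. This I would obtain from compactness alone, by a pigeonhole argument: cover $\Omega$ by finitely many balls of radius $\varepsilon/2$ and note that the forward orbit $\{T^n\omega\}_{n\ge 0}$ must place two distinct iterates $T^i\omega,T^j\omega$ (with $i<j$) into the same ball (if the orbit is finite, then $\omega$ is periodic and the claim is immediate). Then $d(T^i\omega,T^j\omega)<\varepsilon$, and by the displayed isometry identity with $q=j-i\ge 1$ this reads $d(\omega,T^q\omega)<\varepsilon$. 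Combining this with the previous paragraph shows that every $\omega$ satisfies \textit{RP}, i.e. $PRP(T)=\Omega$, whence \textit{TRP} holds and \textbf{Theorem B} applies to give the stated spectral conclusion.

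The point worth flagging is that the difficulty here is conceptual rather than technical: one must recognize that it is the isometry assumption, not minimality, that trivializes the $n$- and $r$-dependence in the definition of \textit{RP}, reducing the whole matter to the classical recurrence of orbits in a compact space. Hypothesis \textit{(i)} (minimality) is thus not strictly needed to establish \textit{TRP}; its natural role lies in guaranteeing that the spectrum of $H_\omega$ is $\omega$-independent, so that the genericity conclusion of \textbf{Theorem B} describes the whole family uniformly. The only mild care required in the recurrence step is to ensure $q\ge 1$, which is handled by choosing two distinct indices (or by treating the periodic case separately).
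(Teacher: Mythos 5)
Your proposal is correct, and its central step is the same as the paper's: the isometry hypothesis gives $d(T^n\omega,T^{n+q}\omega)=d(\omega,T^q\omega)$, so the uniformity over $0\le n\le rq$ in Definition \ref{repetición} is automatic and everything reduces to producing, for each $\varepsilon>0$, a single $q\ge 1$ with $d(\omega,T^q\omega)<\varepsilon$. Where you diverge is in how that recurrence inequality is obtained. The paper invokes minimality: the forward orbit of $\omega$ is dense, hence returns within $\varepsilon$ of $\omega$. You instead use compactness and pigeonhole: finitely many $\varepsilon/2$-balls cover $\Omega$, two iterates $T^i\omega, T^j\omega$ ($i<j$) share a ball, and the isometry identity converts $d(T^i\omega,T^j\omega)<\varepsilon$ into $d(\omega,T^{j-i}\omega)<\varepsilon$ with $j-i\ge 1$. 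Your route buys a genuinely stronger statement: an isometry of a compact metric space satisfies \textit{GRP} (hence \textit{TRP}) with no minimality assumption at all, so hypothesis \textit{(i)} is only needed downstream (and, as you note, it is what makes the generic spectral conclusion meaningful for the whole family). Both arguments establish $PRP(T)=\Omega$; the paper's is shorter given that minimality is assumed anyway, while yours isolates exactly which hypothesis does the work. One small point of care, which you already flag: the pigeonhole must produce two \emph{distinct indices}, which is automatic since the index set $\{0,1,\dots\}$ is infinite and the cover is finite.
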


\begin{proof}

Consider $\omega\in \Omega$. Our objective is to show that $\omega$ belongs to the \textit{PRP(T)} %(see definition \ref{PRP})
\textit{i.e.} that for all $k\in\mathbb{Z}_+$ there exists a $q_k\in\mathbb{Z}_+$ such that $d(T^n\omega, T^{n+q_k}\omega)<\frac{1}{k}$ for $n\in\{0,\ldots, kq_k\}$. By hypothesis $T$ is minimal, then for $k\in\mathbb{Z}_+$ exists a $\hat{q}_k\in\mathbb{Z}_+$ such that $d(\omega, T^{\hat{q}_k}\omega)<\frac{1}{k}$, for all $n\in\{0,\ldots, k\hat{q}_k\}$. Since $T$ is an isometry: $d(\omega, T^{q_k}\omega)=d(T^n\omega, T^{n+q_k}\omega)$. Therefore:
	\begin{equation*}
		d(\omega, T^{\hat{q}_k}\omega)<\frac{1}{k} \quad\Rightarrow\quad d(T^n\omega, T^{n+\hat{q}_k}\omega)=d(\omega, T^{\hat{q}_k}\omega)<\frac{1}{k}, \quad\forall n\in\{0,\ldots, k\hat{q}_k\}
	\end{equation*}
taking $q_k=\hat{q}_k$ we conclude that $\omega\in PRP(T)$, therefore $\{\Omega, T\}$ satisfies \textit{TRP}.\end{proof}
%This implies that the spectrum of the operator \eqref{Kap5Eq3} is purely continuous for a generic $\omega\in\Omega$ (\textbf{Theorem A}).

%In the following theorem, we consider the measure space $(\mathbb{T}^d, \mathcal{B},\mu)$ where $\mathcal{B}$ is the $\sigma-$-Borel algebra and $\mu$ the Lebesgue measure. 

\begin{theorem}
\label{theorem 3}
(BD, \cite{damanikBase} theorem 3, p.651). \textbf{Spectral type of a generic quasi-periodic Schrödinger operator.} Consider the dynamical system $\{\mathbb{T}^d, T\}$, where $T:\mathbb{T}^d\rightarrow\mathbb{T}^d$ is the ergodic \textit{Shift} on $\mathbb{T}^d$: $T\omega=\omega+\alpha$, where $\alpha=(\alpha_1,\cdots, \alpha_d)$ and the set $\{1,\alpha_1,\dots,\alpha_d\}$ is independent over the rationals numbers. Then $\{\mathbb{T}^d, T\}$ satisfies \textit{TRP} and consequently, for a generic $\omega\in\mathbb{T}^d$ and $f\in C(\mathbb{T}^d)$, the operator:
\begin{equation}\label{Kap5Eq4}
		\begin{aligned}
			H_\omega: \ell^2(\mathbb{Z})&\rightarrow \ell^2(\mathbb{Z})\\
			\psi(n)&\mapsto \psi(n+1)+\psi(n-1)+f(T^n\omega)\psi(n)\\
			&=\psi(n+1)+\psi(n-1)+f(\omega+n\alpha)\psi(n)
		\end{aligned}
	\end{equation}
has purely continuous spectrum.
\end{theorem}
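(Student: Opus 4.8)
The plan is to verify the two hypotheses of Theorem \ref{thc} for the shift $T\omega = \omega + \alpha$ on $\mathbb{T}^d$, and then to invoke that theorem to obtain \textit{TRP}, after which Theorem B delivers the spectral conclusion. Thus the entire argument reduces to showing that $T$ is both an isometry and minimal; once these are in hand, no further spectral analysis is needed because the heavy lifting (Gordon's lemma and the construction of the residual sets) has already been carried out in the proofs of Theorems A and B.

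First I would observe that $T$ is an isometry. Equip $\mathbb{T}^d = \mathbb{R}^d/\mathbb{Z}^d$ with the flat metric inherited from the Euclidean norm on $\mathbb{R}^d$. Since translation by $\alpha$ commutes with the quotient map and preserves Euclidean distance, one has $d(T\omega_1, T\omega_2) = d(\omega_1 + \alpha, \omega_2 + \alpha) = d(\omega_1, \omega_2)$ for all $\omega_1, \omega_2 \in \mathbb{T}^d$. By Lemma \ref{métricas equivalentes} the choice of metric is immaterial for the repetition property on a compact space, so working with this flat metric entails no loss of generality.

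The substantive step is minimality: I would show that $\{T^n\omega\}_{n\geq 0} = \{\omega + n\alpha\}_{n\geq 0}$ is dense in $\mathbb{T}^d$ for every $\omega$, using the hypothesis that $\{1, \alpha_1, \ldots, \alpha_d\}$ is rationally independent. The cleanest route is Weyl's equidistribution criterion: a sequence $\{x_n\}$ in $\mathbb{T}^d$ is equidistributed (hence in particular dense) provided that for every nonzero $k \in \mathbb{Z}^d$ one has $\frac{1}{N}\sum_{n=0}^{N-1} e^{2\pi i \langle k, x_n\rangle} \to 0$. Applying this to $x_n = \omega + n\alpha$, the exponential sum is a geometric series with ratio $e^{2\pi i \langle k, \alpha\rangle}$, so its modulus is bounded by $2/|1 - e^{2\pi i \langle k, \alpha\rangle}|$ and its Cesàro average tends to $0$, as long as $\langle k, \alpha\rangle \notin \mathbb{Z}$. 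The rational independence of $\{1, \alpha_1, \ldots, \alpha_d\}$ guarantees precisely that $\langle k, \alpha\rangle = k_1\alpha_1 + \cdots + k_d\alpha_d \notin \mathbb{Z}$ for every nonzero integer vector $k$, so the criterion is satisfied and the orbit of every $\omega$ is dense.

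With both conditions verified, Theorem \ref{thc} yields that $\{\mathbb{T}^d, T\}$ satisfies \textit{TRP}, and Theorem B then furnishes the residual sets $\mathcal{F} \subseteq C(\mathbb{T}^d)$ and $\Omega_f \subseteq \mathbb{T}^d$ on which $f(T^n\omega) = f(\omega + n\alpha)$ is a Gordon potential; Gordon's lemma rules out eigenvalues, so $H_\omega$ has purely continuous spectrum for generic $\omega$ and $f$. I expect the minimality argument to be the only real obstacle, since it is the sole place where the arithmetic hypothesis enters and everything else is a direct citation of earlier results. The one subtlety to handle with care is the correct invocation of Weyl's criterion over nonzero $k \in \mathbb{Z}^d$ (the $k=0$ term is the trivial character and is automatically consistent with equidistribution), together with the standard implication that equidistribution forces density.
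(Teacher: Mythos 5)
Your proposal is correct and follows essentially the same route as the paper: verify that the shift is an isometry and minimal, then invoke Theorem \ref{thc} to obtain \textit{TRP} and Theorem B for the spectral conclusion. The only difference is one of detail — the paper simply asserts minimality from ergodicity of the translation, whereas you supply a full justification via Weyl's equidistribution criterion, which is where the rational-independence hypothesis genuinely enters; your version is the more complete of the two.
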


 \begin{proof}
Since the transformation $T:\mathbb{T}^d\rightarrow\mathbb{T}^d$ is ergodic, then $T$ is minimal on $\mathbb{T}^d$. Moreover, $T$ is an isometry, since for $\omega_1$ and $\omega_2$ in $\mathbb{T}^d$:
	\begin{equation*}
		\begin{aligned}
			d(T\omega_1,T\omega_2)&=d(\omega_1+\alpha, \omega_2+\alpha)=|\omega_1+\alpha-\omega_2-\alpha|=|\omega_1-\omega_2|=d(\omega_1,\omega_2)
		\end{aligned}
	\end{equation*}
As $T$ is minimal and an isometry on $\mathbb{T}^d$, in view of theorem \ref{thc} we conclude that the spectrum of the operator \eqref{Kap5Eq4} is purely continuous for a generic $\omega\in \mathbb{T}^d$. 
 \end{proof}

Now we study the system $\{\mathbb{T}^2, T\}$, where $T$ is the skew-shift: $T(\omega_1,\omega_2)=(\omega_1+2\alpha, \omega_1+\omega_2)$. For this purpose, we require the definition of the badly approximable subset of $\mathbb{T}$.

\begin{definition}
\label{mal aproximable}
	Let $\alpha\in\mathbb{T}=\mathbb{R}/\mathbb{Z}$. We define the constant $c(\alpha)$ as:
	\begin{equation*}
		c(\alpha)=\liminf_{q\rightarrow\infty}q\langle \alpha q\rangle\quad\text{where:}\quad \langle \alpha q\rangle=\text{dist$_\mathbb{T}(\alpha q, 0)$}=\min\{|\alpha q-p|: p\in\mathbb{Z}\}
	\end{equation*}

It is said that $\alpha\in\mathbb{T}$ is \textit{badly approximable} if $c(\alpha)>0$. Therefore, $\alpha\in\mathbb{T}$ is not badly approximable if there exists a subsequence $\{q_k\}_{k\in\mathbb{Z}_+}$ such that $\lim_{q_k\rightarrow\infty}q_k\langle \alpha q_k\rangle=0$.
\end{definition}

\begin{remark}
\label{medida mal aproximables}

The set of badly approximable numbers has Lebesgue measure equal to zero in $\mathbb{T}$ (Khinchin, \textit{\cite{khinchin} theorem 29, p.60}). Consequently, the set of numbers that are not a badly approximable is of full measure in $\mathbb{T}$.
\end{remark}

%\begin{example}
 % \label{example2}
%Ergodic \textit{Skew Shift} on the torus $\mathbb{T}^2$.

%Consider the measure space $(\mathbb{T}^2,\mathcal{B}, \mu)$, where $\mathcal{B}$ is the $\sigma-$Borel algebra and $\mu$ is the Lebesgue measure on $\mathbb{T}^2$. In this section, we study the properties of the dynamical system $\{\mathbb{T}^2, T\}$, where $T$ is the \textit{Skew Shift} operator (\ref{Kap5Eq2} and its implications on the spectrum of the family of operators $\{H_\omega\}_{\omega\in\mathbb{T}^2}$ defined by:

%\begin{equation}\label{Kap5Eq5}
%	\begin{aligned}
%		H_\omega: \ell^2(\mathbb{Z})&\rightarrow \ell^2(\mathbb{Z})\\
%		\psi(n)&\mapsto \psi(n+1)+\psi(n-1)+f(T^n\omega)\psi(n)\\
%		&=\psi(n+1)+\psi(n-1)+f((\omega_1+2n\alpha, \omega_2+2n\omega_1+n(n-1)\alpha))\psi(n)
%	\end{aligned}
%\end{equation}
%\begin{defn}\label{mal aproximable}
%	$\alpha\in\mathbb{T}=\mathbb{R}/\mathbb{Z}$ es \textit{mal aproximable} si existe una constante $c>0$ tal que:
%	\begin{equation*}
%		\langle \alpha q\rangle=\text{dist$_\mathbb{T}(\alpha q, 0)$}=\min\{|\alpha q-p|: p\in\mathbb{Z}\}>\frac{c}{q}
%	\end{equation*}
%	para todo $q\in\mathbb{Z}\setminus\{0\}$.
%\end{defn}

\begin{theorem}
\label{Teorema 4}
	\textit{(BD, \cite{damanikBase} theorem 4, p.651)}. 
 consider the dynamical system $\{\mathbb{T}^2, T\}$, where
$T:\mathbb{T}^2\rightarrow\mathbb{T}^2$ is the \textit{Skew Shift} operator: $T(\omega_1,\omega_2)=(\omega_1+2\alpha, \omega_1+\omega_2)$, with $\alpha\in\mathbb{R}\setminus\mathbb{Q}$. The following statements are equivalent.
 
	\begin{itemize}
		\item[(i)] $\alpha$ not badly approximable.
		\item[(ii)] $\{\mathbb{T}^2, T\}$ satisfies \textit{GRP}.
		\item[(iii)] $\{\mathbb{T}^2, T\}$ satisfies \textit{MRP}.
		\item[(iv)] $\{\mathbb{T}^2, T\}$ satisfies \textit{TRP}.
	\end{itemize}
\end{theorem}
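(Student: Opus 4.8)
The plan is to reduce all four conditions to a single dichotomy for the set $PRP(T)$ (Definition \ref{PRP}), namely: $PRP(T)=\mathbb{T}^2$ when $\alpha$ is not badly approximable, and $PRP(T)=\emptyset$ otherwise. First I would compute the iterates explicitly; an easy induction gives $T^n(\omega_1,\omega_2)=(\omega_1+2n\alpha,\ \omega_2+n\omega_1+n(n-1)\alpha)$, so the two quantities governing membership in $PRP(T)$ are the first-coordinate difference $\langle 2q\alpha\rangle$ and the second-coordinate difference $\langle q\omega_1+q(2n+q-1)\alpha\rangle$, for $0\le n\le rq$. The decisive feature is that, as $n$ runs over $0,\dots,rq$, the second difference is an arithmetic progression in $n$ with common step $2q\alpha$; hence keeping it below $\varepsilon$ over the whole range forces both the total drift $(rq)\langle 2q\alpha\rangle$ and the constant part $\langle q\omega_1+q(q-1)\alpha\rangle$ (the $n=0$ term) to be small.

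For the implication (i) $\Rightarrow$ (ii) I would prove $PRP(T)=\mathbb{T}^2$. Fix $\omega=(\omega_1,\omega_2)$ and $k$. By Definition \ref{mal aproximable}, not badly approximable yields $q_j\to\infty$ with $q_j\langle q_j\alpha\rangle\to0$, and the essential idea is to take the return time to be a multiple $q=mq_j$. I would first use Dirichlet's pigeonhole to pick $1\le m\le M$ (with $M$ of order $k$) so that $\langle mq_j\omega_1\rangle<\tfrac1{2k}$, which tames the $\omega_1$-dependent constant part for an \emph{arbitrary} $\omega_1$; then choose $j$ so large that $q_j\langle q_j\alpha\rangle$ is smaller than a fixed multiple of $k^{-2}M^{-2}$. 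A direct estimate then gives $\langle 2q\alpha\rangle\le 2m\langle q_j\alpha\rangle$, shows that the $\alpha$-contribution to $q(q-1)\alpha$ has size of order at most $m^2q_j\langle q_j\alpha\rangle$, and bounds the drift by $(kq)\langle 2q\alpha\rangle\le 2kM^2\,q_j\langle q_j\alpha\rangle<\tfrac1k$; combining these yields $\langle q\omega_1+q(2n+q-1)\alpha\rangle<\tfrac1k$ for all $0\le n\le kq$, so $\omega\in PRP(T)$. I expect the main difficulty to lie precisely here: one cannot use the convergents of $\alpha$ alone, since for a generic $\omega_1$ the value $\langle q_j\omega_1\rangle$ need not be small, and it is exactly the extra room coming from $q_j\langle q_j\alpha\rangle\to0$ that lets a suitable multiple $mq_j$ absorb the $m^2$ loss while still approximating $\omega_1$.

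For the converse I would show that badly approximable $\alpha$ forces $PRP(T)=\emptyset$. By Definition \ref{mal aproximable}, $c(\alpha)>0$ means $m\langle m\alpha\rangle\ge c(\alpha)$ for all $m$; taking $m=2q$ gives $q\langle 2q\alpha\rangle\ge c(\alpha)/2$ for every $q$. For any candidate return time $q$, the second-coordinate difference over $0\le n\le kq$ runs through an arithmetic progression of common step $2q\alpha$, whose total drift is $(kq)\langle 2q\alpha\rangle=k\big(q\langle 2q\alpha\rangle\big)\ge k\,c(\alpha)/2$. As soon as $k\ge 2/c(\alpha)$ this drift is at least $1$, so the progression sweeps across all of $\mathbb{T}$ and some term lies at distance $\ge\tfrac12>\tfrac1k$ from $0$; hence no $q$ can realize the repetition property at scale $1/k$. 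Since this obstruction involves neither $\omega_1$ nor $\omega_2$, we conclude $PRP(T)=\emptyset$.

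Finally I would read off the equivalences from the dichotomy. When $\alpha$ is not badly approximable, $PRP(T)=\mathbb{T}^2$ gives \textit{GRP}, hence \textit{MRP} (as $\mu(\mathbb{T}^2)=1>0$) and \textit{TRP} (as $\mathbb{T}^2$ is dense); one may alternatively invoke Lemma \ref{MRP implica medida completa} together with ergodicity of the skew-shift. When $\alpha$ is badly approximable, $PRP(T)=\emptyset$ is neither of positive measure, nor dense, nor all of $\mathbb{T}^2$, so none of \textit{GRP}, \textit{MRP}, \textit{TRP} holds. Thus each of (ii), (iii), (iv) is equivalent to $PRP(T)=\mathbb{T}^2$, and by the two directions above this is equivalent to (i), which closes the chain (i) $\Leftrightarrow$ (ii) $\Leftrightarrow$ (iii) $\Leftrightarrow$ (iv).
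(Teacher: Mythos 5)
Your proposal is correct and follows essentially the same route as the paper: the forward direction uses the same device of taking the return time to be a multiple $mq_j$ of a denominator with $q_j\langle q_j\alpha\rangle\to 0$ and pigeonholing $m\in\{1,\dots,M\}$ to control $\langle mq_j\omega_1\rangle$ for arbitrary $\omega_1$, while the converse extracts the obstruction from the arithmetic progression (with step $2q\alpha$) in the second coordinate, exactly as in the paper's step $(iv)\Rightarrow(i)$. The only differences are organizational and cosmetic --- you prove the dichotomy $PRP(T)\in\{\emptyset,\mathbb{T}^2\}$ rather than the paper's cyclic chain $(i)\Rightarrow(ii)\Rightarrow(iii)\Rightarrow(iv)\Rightarrow(i)$, and your converse is more carefully quantified than the paper's --- apart from the harmless imprecision that $c(\alpha)>0$ only yields $m\langle m\alpha\rangle\ge c$ for some constant $c>0$ and all $m$ (using irrationality of $\alpha$ for the finitely many small $m$), not for $c=c(\alpha)$ itself.
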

\begin{proof}\textcolor{white}{Sea:}
	
	\underline{$(i)\Rightarrow (ii)$}:
Suppose that $\alpha$ is not badly approximable. Let us show that $\{\mathbb{T}^2, T\}$ satisfies \textit{GRP}, i.e., $\mathbb{T}^2=PRP(T)$. Since $\alpha$ is not badly approximable, then there exists a sequence $\{q_k\}_{k\in\mathbb{Z}_+}$, with $\lim_{k\rightarrow\infty}q_k=\infty$ such that.
	\begin{equation*}
		\lim_{k\rightarrow\infty}q_k\langle \alpha q_k\rangle=0
	\end{equation*}

Let $\omega=(\omega_1,\omega_2)\in\mathbb{T}^2$ and consider $k\in\mathbb{Z}_+$. To show that $\omega\in PRP(T)$, we shall construct a sequence $\hat{q}_k$ such that:
	\begin{equation}
 \label{Kap5Eq6}
		d(T^{n+\hat{q}_k}\omega,T^n\omega)<\frac{1}{k},\quad\text{for all $n$ such that:}\quad 0\leq n\leq\hat{q}_k
	\end{equation}
 According to the definition of $T$ in $\mathbb{T}^2$:
	
	\begin{equation}\label{Kap5Eq7}
		T^{n+\hat{q}_k}\omega-T^n\omega=(2\hat{q}_k\alpha, 2\hat{q}_k\omega_1+\hat{q}_k^2\alpha+2n\hat{q}_k\alpha-\hat{q}_k\alpha)
	\end{equation}
	We want to show that $\langle 2\hat{q}_k\alpha\rangle\rightarrow0$ and $\langle2\hat{q}_k\omega_1+\hat{q}_k^2\alpha+2n\hat{q}_k\alpha-\hat{q}_k\alpha\rangle\rightarrow0$. For each $k\in\mathbb{Z}_+$, let $\hat{q}_k=m_k q_k$, where $m_k\in\{1,\ldots,k+1\}$. The above implies that:
	\begin{equation*}
		T^{n+\hat{q}_k}\omega-T^n\omega=(2m_k q_k\alpha, 2m_k q_k\omega_1+(m_k q_k)^2\alpha+2nm_k q_k\alpha-m_k q_k\alpha)
	\end{equation*}

Since $\alpha$ is not badly approximable: $\langle 2m_k q_k\alpha\rangle\rightarrow0$. Likewise: $\langle (m_k q_k)^2\alpha \rangle\rightarrow0$, also $\langle m_k q_k\alpha\rangle\rightarrow0$ and $\langle 2nm_k q_k\alpha\rangle\rightarrow0$. It remains to show that $2m_k q_k\omega_1\rightarrow0$. For this purpose, we select $m_k$ from the set $\{1, \cdots, k+1\}$ such that $\langle m_k(2q_k\omega_1)\rangle<\varepsilon$. It follows that $T^{n+\hat{q}_k}\omega-T^n\omega\rightarrow0$ and consequently $d(T^{n+\hat{q}_k}\omega,T^n\omega)\rightarrow0$ for all $n$ such that that $0\leq n\leq \hat{q}_k$. It is concluded that $\omega\in PRP(T)$. Since the selection of $\omega$ was arbitrary, it is concluded that $\mathbb{T}^2 = PRP(T)$. It is concluded that $\mathbb{T}^2=PRP(T)$.	
 
	\quad
	
	\underline{$(ii)\Rightarrow (iii)$}: suppose that the dynamical system $\{\mathbb{T}^2,T\}$ satisfies \textit{GRP}, i.e. $PRP(T)=\mathbb{T}^2$. This implies that $\mu(PRP(T))=\mu(\mathbb{T}^2)=1>0$, then $\{\mathbb{T}^2,T\}$ satisfies \textit{MRP}.

	\quad
	
	\underline{$(iii)\Rightarrow (iv)$}: suppose $\{\mathbb{T}^2,T\}$ satisfies \textit{MRP}. Since $T$ is $\mu-$ergodic in $\mathbb{T}^2$ and the Lebesgue measure is strictly positive, then the system $\{\mathbb{T}^2,T\}$ satisfies \textit{TRP}.
	
	\quad
	
	\underline{$(iv)\Rightarrow (i)$}:
Suppose $\{\mathbb{T}^2,T\}$ satisfies \textit{TRP}. Then the set $PRP(T)$ is dense in $\mathbb{T}^2$. In particular, there exists $\omega\in\mathbb{T}^2$ such that $\{T^n\omega\}_{n\geq0}$ satisfies \textit{RP}. This means that for all $\varepsilon>0$
there exists a $q_k$ such that $d(T^{n+q_k}\omega, T^n\omega)<\varepsilon$. According to equation \eqref{Kap5Eq7}:

	\begin{equation}\label{Kap5Eq8}
		\langle 2q_k\omega_1+q_k^2\alpha+2nq_k\alpha-q_k\alpha\rangle<\varepsilon,\quad\text{for $n$ such that:}\quad 0\leq n\leq q_k
	\end{equation}
for $n = 0$ equation \eqref{Kap5Eq8} implies that $\langle 2q_k\omega_1+q_k^2\alpha-q_k\alpha\rangle<\varepsilon$. That is, $2q_k\omega_1+q_k^2\alpha-q_k\alpha\in\mathbb{T}$ is at a distance less than $\varepsilon$ from $0$. Then, for $\varepsilon>0$ arbitrarily small: $\langle 2nq_k\alpha\rangle=n\langle 2q_k\alpha\rangle$ for all $n$: $0\leq n\leq q_k$. Making $n=q_k$ we get $q_k\langle q_k\alpha\rangle<\varepsilon$. We conclude that $q_k\langle \alpha q_k\rangle\rightarrow0$, which by definition means that $\alpha$ is not badly approximable.
	\end{proof}
\begin{corollary}
\label{Teorema 4_1}
\textbf{Spectral type of Schrödinger operators with potential defined by skew-shift on $\mathbb{T}^2$}. Let $T:\mathbb{T}^2\rightarrow\mathbb{T}^2$ be the operator defined by $T(\omega_1,\omega_2)=(\omega_1+2\alpha, \omega_1+\omega_2)$, where $\alpha$ is not badly approximable. Then for a generic $\omega\in\mathbb{T}$ and $f:\mathbb{T}^2\rightarrow\mathbb{R}$, the operator:
	\begin{equation}\label{Kap5Eq20}
		\begin{aligned}
			H_\omega: \ell^2(\mathbb{Z})&\rightarrow \ell^2(\mathbb{Z})\\
			\psi(n)&\mapsto \psi(n+1)+\psi(n-1)+f(T^n\omega)\psi(n)\\
			&=\psi(n+1)+\psi(n-1)+f((\omega_1+2n\alpha, \omega_2+2n\omega_1+n(n-1)\alpha))\psi(n)
		\end{aligned}
	\end{equation}
	has purely continuous spectrum.
\end{corollary}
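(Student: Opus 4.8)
The plan is to obtain the corollary by chaining three results already established: Theorem \ref{Teorema 4}, \textbf{Theorem B}, and Gordon's lemma. The point to keep in mind is that the skew-shift is \emph{not} an isometry on $\mathbb{T}^2$ (the second coordinate shears), so Theorem \ref{thc} does not apply; the verification of \textit{TRP} must instead come from Theorem \ref{Teorema 4}. First I would note that the hypothesis ``$\alpha$ is not badly approximable'' is exactly statement (i) of Theorem \ref{Teorema 4}. Invoking the equivalence (i)$\Leftrightarrow$(iv) proved there, the dynamical system $\{\mathbb{T}^2,T\}$ with $T(\omega_1,\omega_2)=(\omega_1+2\alpha,\omega_1+\omega_2)$ satisfies \textit{TRP}.

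With \textit{TRP} in hand, I would apply \textbf{Theorem B} verbatim: there is a residual subset $\mathcal{F}\subseteq C(\mathbb{T}^2)$ such that for each $f\in\mathcal{F}$ there is a residual subset $\Omega_f\subseteq\mathbb{T}^2$ for which the potential $n\mapsto f(T^n\omega)$ is a Gordon potential whenever $\omega\in\Omega_f$. This is precisely the genericity asserted by the corollary, once one reads ``generic $\omega\in\mathbb{T}^2$'' as membership in the residual set $\Omega_f$ and ``generic $f$'' as membership in $\mathcal{F}$.

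The last step is to convert ``Gordon potential'' into ``purely continuous spectrum''. Fix $f\in\mathcal{F}$ and $\omega\in\Omega_f$, and suppose $E\in\mathbb{R}$ is an eigenvalue of $H_\omega$ with eigenfunction $\psi$. Then $\psi$ solves $\psi(n+1)+\psi(n-1)+f(T^n\omega)\psi(n)=E\psi(n)$, and Gordon's lemma gives $\limsup_{|n|\to\infty}\frac{\psi(n+1)^2+\psi(n)^2}{\psi(1)^2+\psi(0)^2}\geq\frac14$, which forces $\psi\notin\ell^2(\mathbb{Z})$, a contradiction. Hence $H_\omega$ has no eigenvalues. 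Since $f$ is continuous on the compact space $\mathbb{T}^2$, the potential is bounded and $H_\omega$ is a bounded self-adjoint operator by Corollary \ref{corollary1}; its spectrum therefore splits into point and continuous parts, and the absence of eigenvalues makes the spectrum purely continuous.

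Finally, to match the explicit formula in \eqref{Kap5Eq20} I would compute $T^n\omega$ by a short induction: the first coordinate advances by $2\alpha$ at each step, giving $\omega_1+2n\alpha$, while the second coordinate accumulates successive first coordinates, yielding a term linear in $\omega_1$ plus a quadratic-in-$n$ multiple of $\alpha$; substituting into $f$ recovers the displayed potential. No genuine obstacle arises at any stage—all the analytic and number-theoretic content is contained in the cited results—so the ``hard part'' is really just recognizing that the not-badly-approximable hypothesis is the correct gateway (via Theorem \ref{Teorema 4}, not Theorem \ref{thc}) to \textit{TRP}, after which the corollary is a direct specialization of \textbf{Theorem B}.
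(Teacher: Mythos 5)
Your proposal is correct and follows essentially the same route as the paper: Theorem \ref{Teorema 4} (the equivalence of ``not badly approximable'' with \textit{TRP}) feeds into \textbf{Theorem B}, and Gordon's lemma converts the generic Gordon potential into the absence of eigenvalues. The paper's own proof is just a terser version of this chain, leaving the Gordon-lemma step and the computation of $T^n\omega$ implicit, so your added detail (including the correct observation that Theorem \ref{thc} does not apply because the skew-shift is not an isometry) only makes the argument more complete.
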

\begin{proof}
Suppose that $\alpha$ is not badly approximable. By theorem \ref{Teorema 4}, the system $\{\mathbb{T}^2, T\}$ satisfies \textit{TRP}. This in turn implies, as a consequence of \textbf{Theorem B}, that purely continuous spectrum is a generic property of $\{H_\omega\}_{\omega\in\mathbb{T}^2}$.
\end{proof}

In this paper we studied the conditions that allow us to conclude that purely continuous spectrum is a generic property of ergodic Schrödinger operators. The theorems studied, as pointed out by Boshernitzan and Damanik \cite{damanikBase}, support the notion that a repetition property is associated with the absence of point spectrum in Schrödinger operators. The topic covered in this paper can be expanded in multiple directions, for example:

\textit{Schrödinger operators in higher dimensions}. This domain has been covered by authors as Fan and Han \cite{fan}, who study the continuous spectrum of the Schrödinger operators in $\ell^2(\mathbb{Z}^d)$ for a measurable potential function $f: \mathbb{T}^d\rightarrow \mathbb{R}$.

\textit{Repetition properties and entropy}. A question of interest is to study what implications has the properties \textit{TRP} and \textit{MRP} on the entropy of a dynamical system. This issue was investigated by Huang et. al. \cite{huang}, who state that positive entropy of a dynamical system $\{\Omega, T\}$ implies the presence of point spectrum on a generic Schrödinger operator $\{H_\omega\}_{\omega\in\Omega}$. Finally, remains the question of neccesary conditions on the dynamical system $\{\Omega, T\}$ to guarantee the generic continuous spectrum of the family of Schrödinger operators $\{H_\omega\}_{\omega\in\Omega}$, topic that is framed in the theory of inverse problems in spectral theory.

\section*{Acknowledgements}
This paper is a product of the M.Sc. Thesis in Mathematics entitled \textit{Spectral analysis of ergodic Schrödinger operators}, made by the first author under the supervision of the second author, at \textit{Universidad Nacional de Colombia}. We would like to thank Professor Serafín Bautista for his guidance and recommendations, as well as Professor Leonardo Rendón for his suggestions to the work.

\printbibliography

\end{document}